\newif\ifJ
\ifJ
\documentclass[review,onefignum,onetabnum,final]{siamart251216}
\usepackage{amsmath,amsfonts,amssymb}
\newsiamremark{remark}{Remark}
\newsiamremark{example}{Example}
\else

\documentclass[8pt]{article}
\usepackage[a4paper, total={5.5in, 8in}]{geometry}
\usepackage[numbers]{natbib}
\usepackage{amsmath,amsfonts,amsthm,amssymb}

\fi

\usepackage{mathrsfs}

\ifJ
\else
\usepackage[breaklinks,bookmarks=false]{hyperref}
\hypersetup{colorlinks, linkcolor=blue, citecolor=blue,
urlcolor=blue, plainpages=false, pdfwindowui=false,
pdfstartview={FitH}}
\fi

\usepackage{xcolor}
\usepackage{graphicx} 
\usepackage{mathtools}
\ifJ
\else
\fi
\usepackage{stmaryrd}
\newcommand{\jump}[1]{\left\llbracket #1 \right\rrbracket}
\usepackage{enumitem}
\usepackage{bigints}

\ifJ
\usepackage[disable]{todonotes} 
\else
\usepackage{todonotes}
\reversemarginpar
\fi

\DeclareMathOperator{\diam}{diam}

\newcommand{\abs}[1]{\left\lvert#1 \right\rvert}
\newcommand{\norm}[1]{\left\lVert#1\right\rVert}
\newcommand{\dualprod}[3]{\left\langle#1,#2\right\rangle_{#3}}
\newcommand{\LTwoInner}[3]{\left(#1,#2\right)_{#3}}
\newcommand{\dx}{\mathrm{d}x}

\newcommand{\dt}{\mathrm{d}t}
\newcommand{\dpH}{H_p}

\newcommand{\calF}{\mathcal{F}}
\newcommand{\calI}{\mathcal{I}}
\newcommand{\calR}{\mathcal{R}}

\newcommand{\calV}{\mathcal{V}}
\newcommand{\calS}{\mathcal{S}}
\newcommand{\R}{\mathbb{R}}
\newcommand{\N}{\mathbb{N}}
\newcommand{\V}{\mathbb{V}_{\T,\mathcal{J}}}
\newcommand{\Vone}{\mathbb{V}_1}
\newcommand{\Vtwo}{\mathbb{V}_2}
\newcommand{\Vi}{\mathbb{V}_i}

\newcommand{\RH}{\norm{R_1^X(\overline{u},\overline{m})}_{Y^*_0}}
\newcommand{\RK}{\norm{R_2^Y(\overline{u},\overline{m})}_{X^*}}

\newcommand{\timejump}[1]{\left( \! \left| #1 \right| \! \right)}

\newcommand{\uBar}{\overline{u}}
\newcommand{\mBar}{\overline{m}}

\newcommand{\osc}{\mathrm{osc}}
\newcommand{\etaRes}[1]{\eta_{R,#1}}

\newcommand{\etaRecon}[1]{\eta_{\mathrm{J},#1}}
\newcommand{\etaStab}[1]{\eta_{\mathcal{S},#1}}

\newcommand{\T}{\mathcal{T}}

\newcommand{\discreteFun}[3]{#1 _{#2,#3}}
\newcommand{\htau}[1]{\discreteFun{#1}{h}{\tau}}

\newcommand{\uht}{\htau{u}}
\newcommand{\mht}{\htau{m}}
\newcommand{\vht}{\htau{v}}
\newcommand{\wht}{\htau{w}}

\newcommand{\Uht}{\htau{U}}
\newcommand{\Mht}{\htau{M}}

\newcommand{\taun}{\tau_n}

\newcommand{\massLumped}[2]{\LTwoInner{#1}{#2}{\Omega,h}}
\newcommand{\TF}{\T_{F}}
\newcommand{\calFKi}{\calF_K^{I}}
\newcommand{\tTK}{\widetilde{\T}_K}
\newcommand{\calE}{\mathcal{E}}

\numberwithin{equation}{section}

\ifJ
\else
\newtheorem{theorem}{Theorem}[section]{\bfseries}{\it}
\newtheorem{proposition}[theorem]{Proposition}{\bfseries}{\it}
\newtheorem{lemma}[theorem]{Lemma}{\bfseries}{\it}
{\bfseries}{\it}
{\bfseries}{\it}
\newtheorem{example}{Example}[section]{\bfseries}{\rmfamily}
{\bfseries}{\it}
\newtheorem{remark}{Remark}[section]{\bfseries}{\rmfamily}
\newcommand{\proofbox}{\qed}
\fi

\ifJ

\headers{A posteriori error bounds for MFG}{I.~Smears \& H.~Wells}

\title{A posteriori error bounds for finite element approximations of time-dependent mean field games\thanks{Submitted to the editors xx-yy-2026}.\funding{I.~Smears and H.~Wells were supported by the Engineering and Physical Sciences Research Council [grant number EP/Y008758/1]}}

\author{Iain Smears\thanks{Department of Mathematics, University College London, Gower
	Street, WC1E 6BT London, United Kingdom. (\email{i.smears@ucl.ac.uk}, \email{h.wells@ucl.ac.uk}).}
\and Harry Wells\footnotemark[2]
}

\else

\title{A posteriori error bounds for finite element approximations of time-dependent mean field games}
\author{Iain Smears, Harry Wells}

\fi

\begin{document}
\maketitle

\begin{abstract}
We present \emph{a posteriori} error bounds for a general class of stabilized finite element approximations of time-dependent mean field games.
We first show the equivalence between the norm of the error and the dual norm of the residual in the coupled Hamilton--Jacobi--Bellman and Kolmogorov--Fokker--Planck equations.
We then derive a reliable and efficient \emph{a posteriori} error estimator that is based on residual estimators, along with the temporal jump estimator, and an estimator for the stabilization terms in the numerical discretization.
Finally, for stabilizations based on mass-lumping in time and affine-preserving spatial stabilizations, we show that the stabilization estimators can be bounded in terms of the residual and temporal jump estimators, thus yielding an improved reliable, locally computable, and locally efficient estimator.
\end{abstract}

\ifJ
\begin{keywords}
A posteriori error analysis, mean field games, stabilized finite element methods
\end{keywords}
\begin{MSCcodes}
65M15, 65M60, 35Q89
\end{MSCcodes}
\fi

\section{Introduction}\label{sec-introduction}
Mean field games (MFG) model the Nash equilibria of differential games for a large population of players. MFG were introduced by Lasry and Lions \cite{lasry2006jeuxI,lasry2006jeuxII,lasry2007mean} and independently by Huang, Caines, and Malham\'{e} \cite{huang2006large}.
In this paper, we consider second-order time-dependent MFG systems of the form
\begin{equation}\label{eq:MFG-system}
\begin{aligned}
    -\partial_t u -\nu \Delta u + H(t,x,\nabla u) &= F[m] \ && \mathrm{in }~ (0,T) \times \Omega, \\
    \partial_t m -\nu \Delta m - \mathrm{div}\left( \dpH(t,x,\nabla u) ~m \right) &= G \ && \mathrm{in}~ (0,T) \times \Omega, \\
    m(0) = m_0,\quad u(T) &= S[m(T)] \ && \mathrm{in}~ \Omega,   \\
    m = 0, \quad u&=0 \ &&\mathrm{on}~ (0,T) \times \partial \Omega ,
\end{aligned}
\end{equation}
where $\Omega\subset \R^d$ is a bounded domain that represents the state space of the game, where~$u$ denotes the value function, and where~$m$ denotes the density of players across the state space~$\Omega$. 
The main assumptions in this work regarding the problem data are that $\nu>0$ is a constant, the Hamiltonian $(t,x,p)\mapsto H(t,x,p)$ is~$C^{1,1}$ with respect to~$p$, the coupling terms~$F$ and~$S$ are Lipschitz continuous and monotone operators on suitably chosen function spaces, the source term~$G$ is nonnegative in the sense of distributions, and the initial density~$m_0$ is nonnegative. More detailed assumptions on the problem data are stated in Section~\ref{sec:preliminaries} below.

The numerical approximation of MFG presents several significant challenges, such as the nonlinear coupling of the forward and backward parabolic equations, the lack of coercivity of the spatial differential operators, and the need to preserve nonnegativity of the density at the discrete level.
Finite element methods (FEM) for time-dependent MFG systems, such as~\eqref{eq:MFG-system}, and their steady-state counterparts, were analysed recently by Osborne and the first author in~\cite{OsborneSmears2024i,OsborneSmears2025i}, where the convergence of the methods was proved for MFG systems with nondifferentiable Hamiltonians. 
The FEM in these works are based on a continuous piecewise affine discretization with stabilization of the advective terms in space, and an implicit Euler discretization of the temporal derivatives with lumped masses, which are chosen to ensure a discrete maximum principle and the nonnegativity of the approximations of the density~$m$.
The asymptotic near quasi-optimality and optimal convergence rates of the FEM were then proved in~\cite{OsborneSmears2025ii} for steady-state problems where the Hamiltonian $H$ is $C^{1,1}$ with respect to the gradient variable of the value function, and the couplings are monotone.
Convergence rates of FEM for steady-state MFG where the Hamiltonian $H$ is merely Lipschitz have also recently been shown in~\cite{OsborneSmears2026}, based on the regularization analysis from~\cite{OsborneSmears2025iii}.
Berry, Ley \& Silva~\cite{BerryLeySilva2025} have analysed convergence rates of FEM when the solutions are stable with respect to linearizations.
This approach has recently been extended to spatial semidiscretizations for time-dependent problems by Berry in~\cite{berry2026}.

Whereas the works above concern the \emph{a priori} analysis of FEM for MFG, we are interested here in \emph{a posteriori} analysis, where the goal is to bound the error in terms of a computable error estimator that depends on the numerical solution and the computational meshes, without requiring \emph{a priori} knowledge of the true solution or assumptions about its regularity.
\emph{A posteriori} error estimators are a central ingredient for adaptive mesh refinement strategies, which can be significantly more computationally efficient than uniform mesh refinements.
We refer the reader to the textbook~\cite{verfurth2013posteriori} for an introduction to \emph{a posteriori} error analysis.
We emphasize also that the \emph{a posteriori} error analysis of parabolic problems features a number of challenges not present in the elliptic setting, especially regarding the efficiency of the estimators and the lack of temporal conformity of standard temporal discretizations, see~\cite{ErikssonJohnson1991,ErnSmearsVohralik2017b,ErnSmearsVohralik2019,GeorgoulisMakridakis2023,LakkisMakridakis2006,MakridakisNochetto2006,Picasso1998,Smears2026,Verfurth2003} for a variety of approaches in the case of linear parabolic problems, and also to~\cite{smears2025IntroductionAPosteriori} for an introductory overview.
The only work so far on \emph{a posteriori} error analysis for numerical discretizations of MFG is~\cite{OsborneSmearsWells2025}, where reliable and efficient estimators were established for a broad class of stabilized FEM in the case of steady-state MFG systems with $C^{1,1}$ Hamiltonians.
The estimators consist of residual-based estimators plus an additional stabilization estimator that results from the stabilization terms in the method.
It was also shown in~\cite{OsborneSmearsWells2025} that, if the stabilization terms have some additional structure, including being locally affine-preserving, then the stabilization estimator can be bounded in terms of the jump estimators, so it does not need to be computed in practice.

The starting point for the \emph{a posteriori} error analysis of~\eqref{eq:MFG-system} is the well-posedness of the continuous problem.
It is known already that, under suitable assumptions such as the monotonicity of the coupling operators~$F$ and~$S$ and nonnegativity of $m_0$ and $G$, the MFG system is well-posed with a unique weak solution pair $(u,m)\in X\times Y$ where the Bochner--Sobolev spaces are defined by $X\coloneqq L^2(0,T;H^1_0(\Omega))$ and $Y\coloneqq X\cap H^1(0,T;H^{-1}(\Omega))$; we refer to~\cite[Theorems 3.1 \& 3.2]{OsborneSmears2025i} and~\cite[Theorem 5.2.1]{osborne2024Thesis}, as well as Section~\ref{sec:preliminaries} below, for details.
In the first main result of this work, see Theorem~\ref{theorem:equivalence-result} below, we prove the stronger quantitative property, namely the equivalence between error and dual norm of residual, which takes the form 
\begin{equation}
\norm{u-\uBar}_X + \norm{m-\mBar}_Y \eqsim \calR(\uBar,\mBar), \label{eq:intro-equiv}
\end{equation}
for all $(\uBar,\mBar)\in X\times Y$ such that $\mBar \geq 0$ a.e.\ in $(0,T)\times \Omega$, where the residual functional $\calR(\cdot,\cdot)$, defined in~\eqref{eq:calR-def} below, comprises the dual norms of the residuals of the weak forms of the HJB and KFP equations, see~\eqref{eq:R_i-defs}, alongside the initial condition error of the KFP equation.
The notation $\eqsim$ in~\eqref{eq:intro-equiv} means that the left- and right-hand sides are bounded from above and below by each other, up to hidden multiplicative constants that depend solely on the problem data.
We note that one improvement in the analysis here for the time-dependent MFG system over that of the steady-state problem in~\cite{OsborneSmearsWells2025} is that the equivalence result~\eqref{eq:intro-equiv} is not restricted to functions $(\uBar,\mBar)$ in some neighbourhood of the solution, so the resulting \emph{a posteriori} error bounds for time-dependent problems are applicable also on coarse meshes.

We then derive \emph{a posteriori} error bounds for a broad family of discretizations of~\eqref{eq:MFG-system}, consisting of conforming piecewise affine FEM in space and implicit Euler discretizations in time, along with general abstract stabilizations to ensure nonnegativity of the density. This class includes the method of~\cite{OsborneSmears2025i} as a particular example.
Methods in this family lead to numerical approximations $(\uht,\mht)\in \Vone\times \Vtwo$, where the discrete spaces $\Vi$ are discrete approximations spaces of functions that are piecewise constant in time with respect to the time-step partition of $[0,T]$, and continuous piecewise affine in space with respect to a mesh $\mathcal{T}$ over $\Omega$, see Section~\ref{sec:FEM-scheme} below for further details.
Since the approximation spaces $\Vi$ are contained in the space~$X$ but not in~$Y$, the equivalence~\eqref{eq:intro-equiv} bound cannot be immediately applied to the numerical solution $(\uht,\mht)$.
Following~\cite{ErnSmearsVohralik2017b}, we address this challenge by defining a suitable extension of the norm on $Y$ to the sum space $\Vi+Y$, see~\eqref{eq:error-norm-defs} below, and we apply the equivalence result~\eqref{eq:MFG-system} to the pair $(\uht,\Mht)$ where $\Mht\in Y$ is a suitably defined continuous piecewise affine in time reconstruction based on $\mht$. 
These extended norms and reconstructions bridge the temporal conformity gap, enabling us to use the continuous equivalence~\eqref{eq:intro-equiv} to bound the errors of the stabilized FEM solution $(\uht,\mht) \in \Vone \times \Vtwo$.
This leads to our main results on the \emph{a posteriori} error bounds in Theorems~\ref{theorem:reliability}, \ref{theorem:local-efficiency}, and~\ref{theorem:global-efficiency} below, where we show the reliability and efficiency of the estimators via bounds of the form
\begin{equation}\label{eq:intro:a-post-bounds}
    \begin{aligned}
    \norm{u-\uht}_{\Vone + Y} + \norm{m-\mht}_{\Vtwo + Y} \lesssim \eta(\uht,\mht) + \text{ oscillation}, \\
    \eta(\uht,\mht) \lesssim \norm{u-\uht}_{\Vone + Y} + \norm{m-\mht}_{\Vtwo + Y} + \text{ oscillation},
\end{aligned}
\end{equation}
where $\eta(\uht,\mht)$ is a computable estimator that includes the standard residual estimators, the temporal jump estimators that measure the lack of $Y$-conformity of the numerical approximations, stabilization estimators, and estimators for the error in the initial and final time conditions.
As above, the notation $\lesssim$ means that the inequality holds with a hidden multiplicative constant that is independent of the discretization parameters.
The oscillation terms in~\eqref{eq:intro:a-post-bounds} represent the data approximation error terms that typically arise in residual-type \emph{a posteriori} error analysis.
The estimator and data oscillation terms are defined in Section~\ref{sec:estimator-defs}.
Therefore, the bounds in~\eqref{eq:intro:a-post-bounds} show that the resulting estimators are reliable, as well as globally efficient.

For the general class of abstract stabilizations for which we show~\eqref{eq:intro:a-post-bounds}, the stabilization estimator is not locally computable. 
Therefore, in order to obtain a locally computable and locally efficient estimator, we refine the analysis for a narrower class of stabilizations, namely those that consist of mass lumping of the time derivative terms with the patchwise affine-preserving stabilization of the spatial terms.
First, we show that the part of the stabilization estimator that results from mass lumping is bounded by the standard temporal jump estimator, see Proposition~\ref{prop:mass-lump-estimate} below, under the hypothesis that $h^2\lesssim \tau$ where $h$ is the spatial mesh-size and $\tau$ is the time-step size. 
Note that the condition $h^2\lesssim \tau$ corresponds to the practically relevant case in computations, as the time-steps are usually not excessively small compared to the mesh-size.
Furthermore, we show that, for patchwise affine-preserving stabilizations, the spatial stabilization terms are bounded by the spatial jump estimator (which is one of the components of the standard residual estimator), see Proposition~\ref{prop:stab<=jump} below. 
Consequently, we show in Theorem~\ref{theorem:stab-free-error-bound} below that the stabilization estimators can be removed, and the whole error is bounded (up to data oscillation terms) by the residual, temporal jump, initial, and final time estimators. 
Therefore, we obtain reliable and locally efficient \emph{a posteriori} error estimators for this class of methods.

The remainder of this paper is organized as follows. Section~\ref{sec:preliminaries} introduces the notation and fundamental properties of the MFG system~\eqref{eq:MFG-system} required for our analysis. 
Section~\ref{sec:equivalances} establishes the continuous-level equivalence between the approximation errors and the dual norms of the residuals.
The general class of stabilized FEM schemes considered in this work is defined in Section~\ref{sec:FEM-scheme}. 
In Section~\ref{sec:a-post-bounds}, we construct the \emph{a posteriori} error estimators and prove their reliability and efficiency.
Finally, Section~\ref{sec:local-estimators} derives stabilization-free error bounds for a class of stabilization schemes with additional structure.

\section{Preliminaries} \label{sec:preliminaries}

\paragraph{Basic notation}
We denote $\mathbb{N}\coloneqq \{1,2,3,\cdots\}$.
For a Lebesgue measurable set $\omega\subset \R^d$, $d\in\N$, we let $|\omega|_d$ denote its Lebesgue measure and $\diam~ \omega$ its diameter.
Let $L^2(\omega)$ and $L^2(\omega;\R^d)$ denote respectively the usual Lebesgue spaces of scalar, respectively $d$-dimensional vector fields, on $\omega$.
The inner products on $L^2(\omega)$ and $L^2(\omega;\R^d)$ are denoted commonly by~$(\cdot,\cdot)_{\omega}$, with induced norm denoted by $\norm{\cdot}_\omega$; there will be no risk of confusion as the two cases will be distinguished by the arguments.
Let $L^{\infty}(\omega;\mathbb{R}^{d\times d})$ denote the space of essentially bounded $d\times d$ matrix-valued functions on $\omega$, equipped with the essential supremum norm $\|\cdot\|_{L^{\infty}(\omega;\mathbb{R}^{d\times d})}$ that is induced by the Frobenius norm on $\mathbb{R}^{d\times d}$.
Let $\Omega$ be a bounded, connected, open subset of $\mathbb{R}^d$, $d\in\N$, with Lipschitz boundary $\partial \Omega$. 
For a given final time $T>0$, let $I \coloneqq (0,T)$ denote the bounded open time interval,  and let $Q_T = I \times \Omega$ denote the time-space domain. 
Let $H^1(\Omega)$ and $H^1_0(\Omega)$ denote the usual Sobolev spaces, cf.~\cite{AdamsFournier03}.
Note that the Poincar\'e inequality for the bounded domain $\Omega$ implies that $v\mapsto \norm{\nabla v}_\Omega$ defines a norm on $H^1_0(\Omega)$.
Let $H^{-1}(\Omega)$ denote the dual space of $H_0^1(\Omega)$. 
The canonical norm on $H^{-1}(\Omega)$ is defined by
\begin{equation}
\norm{\Phi}_{H^{-1}(\Omega)}\coloneqq \sup_{\substack{v\in H^1_0(\Omega)\\\norm{\nabla v}_\Omega =1}} \langle \Phi, v\rangle_{} \quad \forall\Phi \in H^{-1}(\Omega),
\end{equation}
where $\langle\cdot,\cdot\rangle_{}$ denotes the duality pairing between $H^{-1}(\Omega)$ and $H^1_0(\Omega)$.

\paragraph{Notation for inequalities}
In the following, the notation $x \lesssim y$ will indicate the existence of a constant $C >0$ such that $x \leq C y$. If there exist two constants such that $x \lesssim y$ and $y \lesssim x$ we will denote this relation by $x \eqsim y$. We shall, when necessary, list the dependency of the hidden constant but not explicitly define it. Hidden constants will typically depend on the PDE data, the domain, and the quality of the domain discretization. Hidden constants will not depend on any discretization parameters.

\paragraph{Bochner spaces}
In the following, we will consider several Bochner--Sobolev spaces~\cite[Chapter IV]{Wloka_1987}. 
Let $X\coloneqq L^2(I;H^1_0(\Omega))$, which is a Hilbert space when equipped with the inner-product $(w,v)\mapsto\int_0^T(\nabla w, \nabla v)_\Omega\,\mathrm{d} t$ and norm
\begin{equation}
\norm{w}_X^2 \coloneqq \int_0^T \norm{\nabla w}^2_\Omega\, \dt  \quad\forall w\in X. \label{eq:X-norm-def}
\end{equation}
Let $X^*$ denote the dual space of $X$ and let $\dualprod{\cdot}{\cdot}{X^* \times X}$ denote the natural duality pairing. 
We define the dual norm on $X^*$  by
\begin{equation}
    \norm{f}_{X^*} \coloneqq \sup_{w \in X \backslash \{0\}} \frac{\dualprod{f}{w}{X^* \times X}}{\norm{w}_X} \quad \forall f \in X^*.
\end{equation}
It is known that the space $X^*$ can be identified with the space $L^2(I; H^{-1}(\Omega))$~\cite[Section 26]{Wloka_1987}.

We also define the space $Y\coloneqq L^2(I;H^1_0(\Omega))~\cap~H^1(I;H^{-1}(\Omega))$, which consists of all functions $v\in X$ that are weakly differentiable in time with time derivative $\partial_t v \in L^2(I;H^{-1}(\Omega))$. 
It is known that the space $Y$ can be equipped with the norm $\norm{\cdot}_Y$ defined by
\begin{equation}\label{eq:Y-norm-def}
\norm{v}_Y^2 \coloneqq \int_0^T \norm{\partial_t v}_{H^{-1}(\Omega)}^2 + \norm{\nabla v}^2_\Omega\ \dt + \norm{v(T)}_\Omega^2 \quad \forall v \in Y, 
\end{equation}
where $v(T)$ denotes the trace value of $v$ at time $T$ in $L^2(\Omega)$.
In particular, the norm $\norm{\cdot}_Y$ is well-defined since the space $Y$ is continuously embedded into $C([0,T]; L^2(\Omega))$.
Furthermore, we have the embedding bound
\begin{equation}
 \max_{t \in [0,T]} \norm{v(t)}_\Omega \leq \norm{v}_Y \quad \forall v \in Y. \label{eq:Y-embedding}
\end{equation}
Additionally, we define $Y_0$ as the space of functions in $Y$ that vanish at initial time, i.e.\ $Y_0 \coloneqq \{ v \in Y\ :\ v(0)=0 \}$.

\paragraph{Problem data}
Let the diffusion coefficient $\nu >0$ be constant.
The initial distribution $m_0$ is assumed to be in $L^\infty(\Omega)$ with $m_0 \geq 0$ a.e.\ in $\Omega$.

The Hamiltonian of the underlying optimal control problem is defined by
\begin{equation}
    H(t,x,p) \coloneqq \sup_{\alpha \in \mathcal{A}} \left[ b(t,x,\alpha) \cdot p - f(t,x,\alpha) \right] \quad \forall (t,x,p) \in \overline{Q_T} \times \R^d,  
     \label{eq:H-def}
\end{equation}
where it is assumed that the control set $\mathcal{A}$ is a compact metric space, and the control-dependent drift $b:[0,T] \times \Omega \times \mathcal{A} \rightarrow \R^d$ and control-dependent running cost $f:Q_T \times \mathcal{A} \rightarrow \R$ are uniformly continuous on $\overline{Q_T} \times \R^d$. 
It follows from these assumptions that the Hamiltonian $H$, defined in~\eqref{eq:H-def} above, is convex and Lipschitz continuous in its third argument, i.e.\
\begin{equation}\label{eq:H-cts-condition}
    \abs{H(t,x,p) - H(t,x,q)} \leq L_H \abs{p-q} \quad \forall (t,x,p,q) \in \overline{Q_T} \times \R^d \times \R^d,
\end{equation}
where $L_H\coloneqq \norm{b}_{C\left(\overline{Q_T} \times \mathcal{A}; \R^d \right)}$. 
We further assume that $H$ is $C^{1,1}$ on $\R^d$ with respect to its third argument, i.e.\ $\dpH$ exists for all arguments, and there exists a constant~$L_{H_p}$ such that
\begin{equation} \label{eq:dpH-cts-condition}
    \abs{\dpH(t,x,p) - \dpH(t,x,q)} \leq L_{H_p} \abs{p-q} \quad \forall (t,x,p,q) \in \overline{Q_T} \times \R^d \times \R^d.
\end{equation}
The Lipschitz continuity of $H$ in~\eqref{eq:H-cts-condition} implies that
\begin{equation}\label{eq:dpH-bounded-condition}
    \abs{\dpH(t,x,p)} \leq L_H \quad \forall (t,x,p) \in \overline{Q_T} \times \R^d.
\end{equation}
Let $D_H(\cdot,\cdot): \overline{Q_T} \times \R^d \times \R^d \to \R$ denote the Bregman divergence of the Hamiltonian with respect to its third argument, defined as
\begin{equation} \label{eq:Bregman-div-def}
    D_H(t,x,p,q) \coloneqq H(t,x,p) - H(t,x,q) - \dpH(t,x,q) \cdot (p-q) \quad \forall (t,x,p,q)\in \overline{Q_T} \times \R^d \times \R^d.
\end{equation}
As a consequence of the convexity of the Hamiltonian and the Lipschitz continuity condition~\eqref{eq:dpH-cts-condition}, we have the following well-known inequality, which can be found for instance in~\cite[Theorem~2.1.5, p.~67]{Nesterov2018},
\begin{equation}\label{eq:Bregman-inequality}
    \abs{\dpH(t,x,p)-\dpH(t,x,q)}^2 \leq 2 L_{H_p} D_H(t,x,p,q) \quad  \forall (t,x,p,q) \in \overline{Q_T}\times \R^d\times \R^d.
\end{equation}
To abbreviate the notation, for a function $v\in X$, let $H[\nabla v]\in L^2(0,T;L^2(\Omega))$ denote the composition of $H$ with $\nabla v$, i.e.\ $H[\nabla v](t,x)\coloneqq H(t,x,\nabla v(t,x))$ a.e.\ $t\in (0,T)$, a.e.\ $x\in \Omega$.
In a similar manner, for a pair $(w,v)\in X\times X$, let $\dpH[\nabla v]$ and $D_H[\nabla w,\nabla v]\in L^2(0,T;L^2(\Omega)) $ be defined by $\dpH[\nabla v](t,x)\coloneqq  \dpH(t,x,\nabla v(t,x))$ and $D_H[\nabla w,\nabla v](t,x)\coloneqq D_H(t,x,\nabla w(t,x),\nabla v(t,x))$ for a.e.\ $t\in (0,T)$, a.e.\ $x\in \Omega$.

We now specify the assumptions on the coupling operators $F$ and $S$. We suppose that there exists a Hilbert space $Z$ such that the nonlinear coupling operator $F\colon Z \to X^*$ is Lipschitz continuous, i.e.\ there exists a constant $L_F >0$ such that 
\begin{align}
     \norm{F[v_1] - F[v_2]}_{X^*} &\leq L_F \norm{v_1 -v_2}_{Z} \quad \forall v_1,v_2 \in Z. \label{eq:F-cts-condition}
\end{align}  
Similarly, we assume that the final time coupling operator $S:L^2(\Omega) \to L^2(\Omega)$ is Lipschitz continuous, i.e.\ there exists a constant $L_S>0$ such that
\begin{equation}
     \norm{S[v_1] - S[v_2]}_\Omega \leq L_S \norm{v_1 - v_2}_\Omega \label{eq:S-cts-condition} \quad \forall v_1,v_2 \in L^2(\Omega).
\end{equation}  
Furthermore, we assume that the spaces $X$ and $Y$ are continuously embedded in $Z$, with the embedding of $Y \hookrightarrow Z$ moreover compact. 
We assume that the coupling operators $F$ and $S$ satisfy a monotonicity condition:
\begin{align}\label{eq:F&S-monotonicity-condition}
    0 \leq \int^T_0 \dualprod{F[v_1]-F[v_2]}{v_1-v_2}{} \dt + \LTwoInner{S[v_1(T)]-S[v_2(T)]}{v_1(T)-v_2(T)}{\Omega},  
\end{align}
for all $v_1, v_2 \in Y$. Note the monotonicity condition~\eqref{eq:F&S-monotonicity-condition} is only required on the space~$Y$, even for coupling terms defined over a larger Hilbert space~$Z$.

Let $G \in L^2(I; H^{-1}(\Omega))$ be of the form $G = g_0 - \nabla \cdot g_1$, where $g_0 \in L^{r}(I;L^{s}(\Omega))$ and $g_1 \in L^{2r}(I;L^{2s}(\Omega;\R^d))$ for some indices $r,s \in (1,\infty]$ satisfying 
\begin{equation}\label{eq:G-indices-conditon}
     \frac{1}{r} + \frac{d}{2s} < 1.
 \end{equation}
 Furthermore, we assume that the source term $G$ is nonnegative in the sense of distributions, i.e.\ $\int_0^T \dualprod{G}{w}{} \dt \geq 0$ for all $w \in X$ which satisfy $w \geq 0$ a.e. in $Q_T$. 

\paragraph{Weak formulation}
A weak formulation of the MFG system~\eqref{eq:MFG-system} is as follows: find $(u,m) \in X \times Y$ such that $m(0) = m_0$ and
\begin{subequations} \label{eq:weak-formulation-XY}
\begin{flalign}
     & \int_0^T \left[ \dualprod{\partial_t v}{u}{} + \LTwoInner{\nu \nabla u}{\nabla v}{\Omega} + \LTwoInner{H[\nabla u]}{v}{\Omega} \right]\dt \label{eq:weak-formulation-HJB}
     \\
      & \hspace{4.4cm}=\int_0^T \dualprod{F[m]}{v}{}\dt + \LTwoInner{S[m(T)]}{v(T)}{\Omega}, \notag
    \\ &\int_0^T \left[ \dualprod{\partial_t m}{\phi}{} + \LTwoInner{\nu \nabla m}{\nabla \phi}{\Omega} + \LTwoInner{m \dpH[\nabla u]}{\nabla \phi}{\Omega} \right]\dt
    = \int_0^T \dualprod{G}{\phi}{}\dt, \label{eq:weak-formulation-KFP}
\end{flalign}
\end{subequations}
for all $(v,\phi) \in Y_0 \times X$. 
Notice that the weak formulation~\eqref{eq:weak-formulation-XY} is obtained by integration-by-parts in time of the temporal derivative term in the HJB equation. 
Under the hypotheses on the problem data given above, in \cite[Theorems 3.1 \& 3.2]{OsborneSmears2025i}, the existence and uniqueness of a weak solution of~\eqref{eq:weak-formulation-XY} was shown for the case when $Z = L^2(I; L^2(\Omega))$, without requiring differentiability of $H$. Following the proof of \cite[Theorem 5.2.1]{osborne2024Thesis}, it is clear that existence of a solution also holds when $F$ is defined on the more general space $Z$ that satisfies the assumptions stated above.
Note that the uniqueness of the solution follows from the  nonnegativity of $m_0$ and $G$, and the monotonicity condition~\eqref{eq:F&S-monotonicity-condition} on the coupling terms $F$ and $S$. 
Furthermore, the density $m$ is nonnegative a.e. in $Q_T$, and is additionally essentially bounded, i.e.\
\begin{align}
    \norm{m}_{L^\infty(Q_T)} &\leq M_\infty, \label{eq:M_infty}
\end{align}
where $M_\infty$ is a constant that depends only~$d$, the indices~$r$ and~$s$ that satisfy~\eqref{eq:G-indices-conditon}, on~$\nu$, $L_H$, the measure of $\Omega$, the $L^r(I;L^q(\Omega))$-norm of~$g_0$, and the $L^{2r}(I;L^{2q}(\Omega;\R^d))$-norm of~$g_1$.
A proof of~\eqref{eq:M_infty} can be found in~\cite[Theorem 7.1, Chapter III]{ladyzhenskaia1968Parabolic}.
Finally, we note that the hypotheses that~$F$ and~$S$ respectively take values in~$X^*$ and in $L^2(\Omega)$ imply that the value function~$u$ that solves~\eqref{eq:weak-formulation-HJB} is also in the space $Y$, i.e.\ the temporal derivative $\partial_t u$ exists in $L^2(I;H^{-1}(\Omega))$, and~$u$ satisfies  $u(T)=S[m(T)]$ in $L^2(\Omega)$ and
\begin{equation}\label{eq:HJB_time_strong_form}
\int_0^T \left[-\dualprod{\partial_t u}{v}{} + (\nu \nabla u,\nabla v)_\Omega + (H[\nabla u],v)_\Omega \right]\mathrm{d}t = \int_0^T \dualprod{F[m]}{v}{} \mathrm{d}t \quad \forall v \in X.
\end{equation}
Both formulations~\eqref{eq:weak-formulation-HJB} and~\eqref{eq:HJB_time_strong_form} of the HJB equation will be used in the following analysis.


\section{Equivalence between errors and residuals}\label{sec:equivalances}
Our first contribution toward deriving computable \emph{a posteriori} error bounds is to prove, in Theorem~\ref{theorem:equivalence-result} below, the equivalence between the norms of the difference between the true solution $(u,m)$ and some general $(\uBar,\mBar)$ and the norms of the residual operators of the MFG system. 

We now define the residual operators associated to the HJB and KFP equations. Let $R_1^X\colon X\times Y_0 \to Y_0^*$ and $R_2^Y\colon X\times Y_0\to X^*$ be defined by
\begin{subequations}\label{eq:R_i-defs}
    \begin{flalign}
        \dualprod{R_1^X(\uBar,\mBar)}{v}{Y^*_0 \times Y_0} &\coloneqq \int_0^T \dualprod{F[\mBar]}{v}{} \dt+ \LTwoInner{S[\mBar(T)]}{v(T)}{\Omega}  \label{eq:R_1^X-def}\\
        & \hspace{0.8cm}-\int_0^T \left[ \dualprod{\partial_t v}{\uBar}{} + \LTwoInner{\nu \nabla \uBar}{\nabla v}{\Omega} + \LTwoInner{H[\nabla \uBar]}{v}{\Omega} \right]\dt, \notag\\
        \dualprod{R_2^Y(\uBar,\mBar)}{w}{X^* \times X} &\coloneqq \int_0^T \left[ \dualprod{G-\partial_t \mBar}{w}{} -\LTwoInner{\nu \nabla \mBar + \mBar \dpH[\nabla \uBar]}{\nabla w}{\Omega} \right]\dt, \label{eq:R_2^Y-def}
    \end{flalign}
\end{subequations}
for all $(v,w) \in Y_0 \times X$. 
Let  $\calR(\cdot,\cdot):X \times Y \to \R_{\geq 0}$ denote the total residual norm functional defined by
\begin{equation} \label{eq:calR-def}
    \calR(\uBar,\mBar) \coloneqq\RH+\RK + \norm{\overline{m}(0)-m_0}_\Omega,
\end{equation}
for any pair $(\uBar,\mBar) \in X \times Y$. It is clear from the definitions of $\calR(\cdot,\cdot)$ and the weak formulation in~\eqref{eq:weak-formulation-XY} that the weak solution $(u,m) \in X \times Y$ satisfies $\calR(u,m) = 0$.
Now we present the main result of this section. 

\begin{theorem}[Equivalence of norms and residuals]\label{theorem:equivalence-result}
    For any $(\overline{u},\overline{m}) \in X \times Y$ such that $\overline{m} \geq 0$ a.e. in $Q_T$, it holds that
    \begin{equation}
        \norm{u-\overline{u}}_X + \norm{m-\overline{m}}_Y \eqsim \calR(\overline{u},\overline{m}), \label{eq:equivalence}
    \end{equation}
    where the hidden constants depend only on $\nu$, $L_H$, $L_{H_p}$, $L_F$, $L_S$, $M_\infty$, $T$, and $\diam \Omega$.
\end{theorem}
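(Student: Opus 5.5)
We treat the two inequalities in \eqref{eq:equivalence} separately. Throughout we write $e_u\coloneqq u-\uBar$ and $e_m\coloneqq m-\mBar$.

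\emph{Upper bound $\calR\lesssim$ error.} This direction is routine and uses only Lipschitz bounds. We use $\calR(u,m)=0$ and subtract the weak formulation \eqref{eq:weak-formulation-XY} from the definitions \eqref{eq:R_i-defs}.

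For $R_1^X$, we have $F[m]-F[\mBar]$, which is controlled by $L_F\norm{e_m}_Z\lesssim\norm{e_m}_Y$ via the embedding $Y\hookrightarrow Z$. The final-time term is bounded using \eqref{eq:S-cts-condition} and \eqref{eq:Y-embedding}. The remaining terms are $\int\langle\partial_t v,e_u\rangle$, the diffusion term, and $H[\nabla u]-H[\nabla\uBar]$; these are bounded by $\norm{e_u}_X\norm{v}_Y$ using \eqref{eq:H-cts-condition} and Poincaré's inequality.

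For $R_2^Y$, we split
\[
m\dpH[\nabla u]-\mBar\dpH[\nabla\uBar]=e_m\dpH[\nabla u]+\mBar\bigl(\dpH[\nabla u]-\dpH[\nabla\uBar]\bigr).
\]
The first term is bounded by $L_H\norm{e_m}$. The second term is the delicate one, since $\mBar$ is not bounded. We write $\mBar=m-e_m$. The $m$-part gives $M_\infty L_{H_p}\norm{e_u}_X$. The $e_m$-part is again bounded by $2L_H\abs{e_m}$, using \eqref{eq:dpH-bounded-condition}. Finally, $\norm{\mBar(0)-m_0}_\Omega\le\norm{e_m}_Y$ by \eqref{eq:Y-embedding}.

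\emph{Lower bound: error $\lesssim\calR$.} The argument has three steps.

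\emph{Step 1 (KFP stability).} Write the KFP error equation
\[
\partial_t e_m-\nu\Delta e_m-\mathrm{div}\bigl(e_m\dpH[\nabla\uBar]\bigr)=\mathrm{div}\bigl(m(\dpH[\nabla u]-\dpH[\nabla\uBar])\bigr)+R_2^Y,
\]
with $e_m(0)=m_0-\mBar(0)$. A standard parabolic energy estimate with bounded drift ($\abs{\dpH}\le L_H$) and Grönwall's inequality gives
\[
\norm{e_m}_Y\lesssim \norm{R_2^Y}_{X^*}+\norm{e_m(0)}_\Omega+M_\infty\norm{\dpH[\nabla u]-\dpH[\nabla\uBar]}_{L^2(Q_T)}.
\]
Here we use $\norm{m}_{L^\infty}\le M_\infty$ so that $m$, not $\mBar$, multiplies the difference, and the bound on $\partial_t e_m$ in $H^{-1}$ follows from the equation.

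\emph{Step 2 (HJB stability).} Similarly, testing the HJB error in the form \eqref{eq:HJB_time_strong_form} and \eqref{eq:R_1^X-def} gives
\[
\norm{e_u}_X\lesssim\norm{R_1^X}_{Y_0^*}+\norm{e_m}_Z+\norm{e_m(T)}_\Omega.
\]
This uses the linearization $H[\nabla u]-H[\nabla\uBar]=\mathbf b\cdot\nabla e_u$ with $\abs{\mathbf b}\le L_H$. One subtlety is that $R_1^X$ is a functional on $Y_0$, so the stability bound must be obtained by a duality/inf-sup argument for the backward linear parabolic operator, whose adjoint is the forward operator on $Y_0$, rather than by testing with $e_u$ directly.

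\emph{Step 3 (closing the coupling).} Steps 1 and 2 are circular, and closing the loop is the main obstacle. We use the Lasry--Lions monotonicity argument. Test the KFP residual identity with $e_u$ and the HJB residual identity with $e_m$; $e_m\in Y_0$ after subtracting a lift of $e_m(0)$, for instance its caloric extension. Subtracting the two, the time-derivative and diffusion terms cancel, and \eqref{eq:F&S-monotonicity-condition} makes the coupling term nonnegative. What remains is
\[
\int_{Q_T} m\,D_H[\nabla\uBar,\nabla u]+\mBar\,D_H[\nabla u,\nabla\uBar]
\le \langle R_1^X,e_m\rangle+\langle R_2^Y,e_u\rangle+\text{initial terms}.
\]
Since $\mBar\ge0$, we can drop the second integral. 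By \eqref{eq:Bregman-inequality},
\[
\norm{\dpH[\nabla u]-\dpH[\nabla\uBar]}_{L^2(Q_T)}^2\lesssim M_\infty\times(\text{this}).
\]
Multiplied by $m$, this is exactly the quantity needed in Step 1; we would actually weight it by $\sqrt m$ to get the bound $\int m\abs{\cdot}^2$ and use $m\le M_\infty$. Hence the left side is bounded by
\[
\calR\,\bigl(\norm{e_u}_X+\norm{e_m}_Y\bigr).
\]

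Combining Steps 1--3, the full error is bounded by $\calR+\sqrt{\calR\cdot\text{error}}$, and Young's inequality absorbs the error term.

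The remaining concern is the $Z$-norm of $e_m$ in Step 2. It is handled by chaining through Step 1, which controls $\norm{e_m}_Y$, so no compactness argument is needed. This is also why the result holds globally, not only in a neighbourhood of the solution. All constants depend only on the listed data, with $\diam\Omega$ entering through Poincaré's inequality and $T$ through Grönwall's inequality.
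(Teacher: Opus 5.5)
Your proposal is correct and follows essentially the same route as the paper. You prove $\calR\lesssim$ error by Lipschitz bounds, and the converse by the inf-sup stability of the linearized HJB and KFP operators, Lasry--Lions cross-testing with the caloric lift of $m_0-\mBar(0)$, the Bregman inequality \eqref{eq:Bregman-inequality}, and Young's inequality.

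The one point to fix is in Step 1, which must keep the weighted term $\norm{m(\dpH[\nabla u]-\dpH[\nabla\uBar])}_{L^2(Q_T;\R^d)}$ rather than $M_\infty\norm{\dpH[\nabla u]-\dpH[\nabla\uBar]}_{L^2(Q_T;\R^d)}$. Your displayed unweighted estimate in Step 3 is false wherever $m$ is small or vanishes. You correct this in words, and it is exactly the caveat the paper makes after Lemma~\ref{lemma:cts-approx-stab}. Note also that the lift adds an extra, harmless $[\calR]^2$ term to the bound in Step 3.
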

The proof is deferred to Section~\ref{sec:Equivalence-proof} below.

\begin{remark}[Equivalence in alternative norms]
     Although not detailed here, we can follow a similar approach to the analysis of this section to prove an equivalence result for other choices of norms on the solution and the residuals. For instance, we can show that
    \begin{equation}\label{eq:alternative-equivalence}
        \norm{u-\overline{u}}_Y + \norm{m-\overline{m}}_Y \eqsim \widetilde{\calR}(\overline{u},\overline{m}), 
    \end{equation}
for any $(\overline{u},\overline{m}) \in Y \times Y$ with $\overline{m}\geq 0$ a.e. in $Q_T$. In this case, the alternative residual norm $\widetilde{\calR}(\overline{u},\overline{m})$ is defined by $ \widetilde{\calR}(\uBar,\mBar) \coloneqq \sum_{i=1}^2 \norm{R_i^Y(\uBar,\mBar)}_{X^*} + \norm{S[\mBar(T)]-\uBar(T)}_\Omega + \norm{\mBar(0)-m_0}_\Omega$, and $R_1^Y(\overline{u},\overline{m})$ is the residual of the HJB equation where the temporal derivative is cast onto $\uBar$ instead of the test function.
However, the equivalence result of Theorem~\ref{theorem:equivalence-result} is better suited for the \emph{a posteriori} error bounds derived below, since it avoids some additional technicalities that appear if one tries to apply~\eqref{eq:alternative-equivalence} to the numerical solution of the FEM scheme.
\end{remark}

\subsection{Stability of the HJB and KFP equations}
Here we establish stability results between the dual norms of the residuals defined in~\eqref{eq:R_i-defs} and the approximation error norms.

\begin{lemma}[Continuity of the residuals]\label{lemma:residual-bounds}
    For all $(\overline{u},\overline{m}) \in X \times Y$, it holds that
    \begin{align}
        \RH  &\lesssim \norm{u-\overline{u}}_X + \norm{m - \overline{m}}_{Y}, \label{eq:R_1-bound} \\
        \RK &\lesssim \norm{u-\overline{u}}_X + \norm{m - \overline{m}}_{Y}, \label{eq:R_2-bound}
    \end{align}
    where the hidden constant of \eqref{eq:R_1-bound} depends on $\nu$, $L_H$, $L_F$, $L_S$, and $\diam \Omega$, while the hidden constant of \eqref{eq:R_2-bound} depends on $\nu$, $L_H$, $L_{H_p}$, $M_\infty$, and $\diam \Omega$. 
\end{lemma}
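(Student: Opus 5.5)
The plan is to subtract from each residual the corresponding weak equation satisfied by the exact solution, so that each residual becomes a sum of differences that can be bounded term by term.

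\textbf{HJB residual.} Since $(u,m)$ satisfies \eqref{eq:weak-formulation-HJB}, we have $R_1^X(u,m)=0$. For any $v\in Y_0$,
\begin{multline*}
\dualprod{R_1^X(\uBar,\mBar)}{v}{Y_0^*\times Y_0} = \int_0^T \dualprod{F[\mBar]-F[m]}{v}{}\dt + \LTwoInner{S[\mBar(T)]-S[m(T)]}{v(T)}{\Omega}
\\
+\int_0^T\left[\dualprod{\partial_t v}{u-\uBar}{} + \LTwoInner{\nu\nabla(u-\uBar)}{\nabla v}{\Omega} + \LTwoInner{H[\nabla u]-H[\nabla\uBar]}{v}{\Omega}\right]\dt.
\end{multline*}
We bound the terms one at a time.
\begin{enumerate}
\item The $F$-term is at most $L_F\norm{m-\mBar}_Z\norm{v}_X$. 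The continuous embedding $Y\hookrightarrow Z$ then gives $\norm{m-\mBar}_Z\lesssim\norm{m-\mBar}_Y$.
\item The $S$-term is at most $L_S\norm{m(T)-\mBar(T)}_\Omega\norm{v(T)}_\Omega$. By \eqref{eq:Y-embedding}, this is at most $L_S\norm{m-\mBar}_Y\norm{v}_Y$.
\item The time-derivative term is at most $\norm{\partial_t v}_{L^2(H^{-1})}\norm{u-\uBar}_X$.
\item The diffusion term is at most $\nu\norm{u-\uBar}_X\norm{v}_X$.
\item The Hamiltonian term uses \eqref{eq:H-cts-condition} and the Poincaré inequality, $\norm{v}_\Omega\le C_P\norm{\nabla v}_\Omega$ with $C_P\lesssim\diam\Omega$. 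It is at most $L_H C_P\norm{u-\uBar}_X\norm{v}_X$.
\end{enumerate}
Each of $\norm{v}_X$, $\norm{\partial_t v}_{L^2(H^{-1})}$ and $\norm{v(T)}_\Omega$ is bounded by $\norm{v}_Y$. Dividing by $\norm{v}_Y$ and taking the supremum over $v$ gives \eqref{eq:R_1-bound}, with constant depending on $\nu,L_H,L_F,L_S,\diam\Omega$. Strictly, the embedding constant of $Y\hookrightarrow Z$ also enters; I treat it as part of the $L_F$-dependence.

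\textbf{KFP residual.} Using \eqref{eq:weak-formulation-KFP}, for $w\in X$ we have
\[
\dualprod{R_2^Y(\uBar,\mBar)}{w}{X^*\times X} = \int_0^T\left[\dualprod{\partial_t(m-\mBar)}{w}{} + \LTwoInner{\nu\nabla(m-\mBar)}{\nabla w}{\Omega} + \LTwoInner{m\dpH[\nabla u]-\mBar\dpH[\nabla\uBar]}{\nabla w}{\Omega}\right]\dt.
\]
The first two terms are bounded directly by $(1+\nu)\norm{m-\mBar}_Y\norm{w}_X$.

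The drift term is the main obstacle. I split it as
\[
m\dpH[\nabla u]-\mBar\dpH[\nabla\uBar] = m\bigl(\dpH[\nabla u]-\dpH[\nabla\uBar]\bigr) + (m-\mBar)\dpH[\nabla\uBar].
\]
Splitting this way, rather than with $\mBar$ in the first part, is essential: $\mBar$ has no $L^\infty$ bound, whereas $m$ does.
\begin{enumerate}
\item For the first part, \eqref{eq:M_infty} and \eqref{eq:dpH-cts-condition} give a bound of $M_\infty L_{H_p}\norm{u-\uBar}_X\norm{w}_X$.
\item For the second part, \eqref{eq:dpH-bounded-condition} gives a bound of $L_H\int_0^T\norm{m-\mBar}_\Omega\norm{\nabla w}_\Omega\dt$. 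By Poincaré this is at most $L_H C_P\norm{m-\mBar}_X\norm{w}_X$, and $\norm{m-\mBar}_X\le\norm{m-\mBar}_Y$.
\end{enumerate}
Taking the supremum over $w$ yields \eqref{eq:R_2-bound}, with constant depending on $\nu,L_H,L_{H_p},M_\infty,\diam\Omega$.

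The hypothesis $\mBar\ge0$ is not needed for this lemma.
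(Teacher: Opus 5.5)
Your proof is correct and follows essentially the same route as the paper: you subtract the weak formulation to get the error identities, bound the HJB terms using the Lipschitz continuity of $F$, $S$, $H$, the embedding $Y\hookrightarrow Z$ and Poincar\'e, and split the drift term so that the $L^\infty$ bound is applied only to the exact density $m$. Your remark that the embedding constant of $Y\hookrightarrow Z$ also enters the hidden constant is accurate; the paper leaves this dependence implicit as well.
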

\begin{proof}
Subtracting the weak formulation~\eqref{eq:weak-formulation-XY} from the residual equations~\eqref{eq:R_i-defs} yields the identities
\begin{subequations} \label{eq:weak-formulation-identity}
\begin{multline}
     \dualprod{R_1^X(\uBar,\mBar)}{v}{Y^*_0 \times Y_0} \\
     = \int_0^T \dualprod{F[\mBar]-F[m]}{v}{}\dt+ \LTwoInner{S[\mBar(T)]-S[m(T)]}{v(T)}{\Omega}  \\
        +\int_0^T \left[ \dualprod{\partial_t v}{u-\uBar}{} + \LTwoInner{\nu \nabla (u-\uBar)}{\nabla v}{\Omega} + \LTwoInner{H[\nabla u]-H[\nabla \uBar]}{v}{\Omega} \right]\dt, \label{eq:weak-formulation-identity-HJB}
\end{multline}
\vspace{-\belowdisplayskip}
\begin{multline}
    \dualprod{R_2^Y(\uBar,\mBar)}{w}{X^* \times X} \\
    = \int_0^T \left[ \dualprod{\partial_t (m-\mBar)}{w}{} + \LTwoInner{\nu \nabla (m-\mBar) }{\nabla w}{\Omega} \right]\dt\\
    +\int_0^T  \LTwoInner{m \dpH[\nabla u]-\mBar \dpH[\nabla \uBar]}{\nabla w}{\Omega}\dt,
    \label{eq:weak-formulation-identity-KFP}
\end{multline}
\end{subequations}
for all $v \in Y_0$ and all $w \in X$.

To prove~\eqref{eq:R_1-bound}, we apply a sequence of triangle and Cauchy--Schwarz inequalities to~\eqref{eq:weak-formulation-identity-HJB}, then we apply the Lipschitz continuity of $H$, $F$, and $S$, see~\eqref{eq:H-cts-condition}, \eqref{eq:F-cts-condition}, and \eqref{eq:S-cts-condition}, followed by embedding $Y \hookrightarrow Z$ and the Poincar\'e inequality.
Similarly, we bound~\eqref{eq:weak-formulation-identity-KFP} by applying a sequence of triangle and Cauchy--Schwarz inequalities, yielding
\begin{equation}
    \RK \lesssim \norm{m-\mBar}_Y + \norm{m \dpH[\nabla u]-\mBar \dpH[\nabla \uBar]}_{L^2(Q_T; \R^d)}. \label{eq:proof:R_2-bound-1}
\end{equation}
To bound the remaining drift term in~\eqref{eq:proof:R_2-bound-1}, we add and subtract terms, apply the triangle inequality, then apply the continuity and boundedness of $\dpH$ and the uniform bound of $m$, see~\eqref{eq:dpH-cts-condition}, \eqref{eq:dpH-bounded-condition}, and \eqref{eq:M_infty}, and the Poincar\'e inequality to obtain
\begin{equation}
    \norm{m \dpH[\nabla u]-\mBar \dpH[\nabla \uBar]}_{L^2(Q_T; \R^d)} \lesssim   \norm{u-\overline{u}}_X +  \norm{m-\overline{m}}_Y, \label{eq:drift-term-bound}
\end{equation}
from which~\eqref{eq:R_2-bound} follows.
\end{proof}

Next, we record some inf-sup stability results that are useful for proving the stability of the HJB and KFP equations.

\begin{lemma}[Norm Stability estimates]\label{lemma:inf-sup-XY}
Let $B(\cdot,\cdot):Y \times X \to \R$ be a bilinear form defined, for any choice of $\overline{b} \in L^\infty(Q_T;\R^d)$, by ~$B(v,w)\coloneqq \int_0^T\dualprod{\partial_t v}{w}{}+\LTwoInner{\nu \nabla v + v \overline{b}}{\nabla w}{\Omega}\dt$ for all $v \in Y$, $w \in X$.
    For every $\phi \in X$ and $\psi \in Y$, it holds that
    \begin{align}
    \norm{\psi}_Y &\lesssim \sup_{w \in X} \frac{B(\psi,w)}{\norm{w}_X} + \norm{\psi(0)}_\Omega \label{eq:inf-sup-Y-L}, \\
    \norm{\phi}_X &\lesssim \sup_{v \in Y_0} \frac{B(v,\phi)}{\norm{v}_Y} \label{eq:inf-sup-X-L},
    \end{align}
    where the hidden constants depend only on $\nu$, the $L^{\infty}(Q_T;\R^d)$-norm of $\overline{b}$, $T$, and $\diam \Omega$.
\end{lemma}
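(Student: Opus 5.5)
The plan is to handle the two bounds in parallel. First we remove the first-order term by an exponential weight in time; then we test with carefully chosen functions, in the spirit of the standard inf-sup theory for parabolic problems (Ern--Guermond, Schwab--Stevenson). Let $\beta\coloneqq\norm{\overline b}_{L^\infty(Q_T;\R^d)}$.

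\emph{Bound \eqref{eq:inf-sup-Y-L}.} Write $\ell\coloneqq\sup_{w\in X}B(\psi,w)/\norm{w}_X$, and set $w=e^{-2\lambda t}\psi\in X$ with $\lambda>0$ to be chosen. Integrating by parts in time, $\int_0^T\dualprod{\partial_t\psi}{e^{-2\lambda t}\psi}{}\dt$ equals
\[
\tfrac12 e^{-2\lambda T}\norm{\psi(T)}_\Omega^2-\tfrac12\norm{\psi(0)}_\Omega^2+\lambda\int_0^T e^{-2\lambda t}\norm{\psi}_\Omega^2\dt.
\]
The drift term is handled by Young's inequality:
\[
\abs{(\psi\overline b,\nabla\psi)_\Omega}\le \tfrac{\nu}{2}\norm{\nabla\psi}_\Omega^2+\tfrac{\beta^2}{2\nu}\norm{\psi}_\Omega^2 .
\]
Choosing $\lambda=\beta^2/(2\nu)$ absorbs it. This gives
\[
e^{-2\lambda T}\Bigl(\norm{\psi(T)}^2_\Omega+\tfrac{\nu}{2}\norm{\psi}_X^2\Bigr)\lesssim \ell\,\norm{\psi}_X+\norm{\psi(0)}_\Omega^2,
\]
and hence $\norm{\psi}_X+\norm{\psi(T)}_\Omega\lesssim \ell+\norm{\psi(0)}_\Omega$. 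The constants depend on $\nu$, $\beta$ and $T$. For the time derivative, the Riesz map $-\Delta^{-1}\colon H^{-1}\to H^1_0$ gives some $w\in X$ with $\nabla w$ of norm $\norm{\partial_t\psi}_{H^{-1}}$ pointwise in time, and $\int_0^T\dualprod{\partial_t\psi}{w}{}\dt=\int_0^T\norm{\partial_t\psi}^2_{H^{-1}(\Omega)}\dt$. It follows that
\[
\Bigl(\int_0^T\norm{\partial_t\psi}^2_{H^{-1}(\Omega)}\dt\Bigr)^{1/2}\le \ell+\nu\norm{\psi}_X+C_P\beta\norm{\psi}_X ,
\]
where the Poincar\'e constant $C_P$ brings in the dependence on $\diam\Omega$. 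Combining the two estimates proves \eqref{eq:inf-sup-Y-L}.

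\emph{Bound \eqref{eq:inf-sup-X-L}.} Here we use duality. Given $\phi\in X$, consider the forward problem: find $v\in Y_0$ with
\[
B(v,w)=\int_0^T(\nabla\phi,\nabla w)_\Omega\dt \quad\text{for all } w\in X.
\]
This is a linear parabolic equation with bounded drift, zero initial datum and right-hand side $-\Delta\phi\in X^*$ of norm $\norm{\phi}_X$. It is well posed in $Y_0$ by the standard Lions/Galerkin theory, since the exponential weight makes the spatial operator coercive. By \eqref{eq:inf-sup-Y-L} with $\psi=v$ and $v(0)=0$, we get $\norm{v}_Y\lesssim\norm{\phi}_X$. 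Testing with $w=\phi$ gives $B(v,\phi)=\norm{\phi}_X^2$, so
\[
\norm{\phi}_X=\frac{B(v,\phi)}{\norm{\phi}_X}\lesssim\frac{B(v,\phi)}{\norm{v}_Y}\le\sup_{v'\in Y_0}\frac{B(v',\phi)}{\norm{v'}_Y},
\]
which is \eqref{eq:inf-sup-X-L}. The case $\phi=0$ is trivial.

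\emph{Main obstacle.} I expect the main obstacle to be the well-posedness of this auxiliary forward problem in $Y_0$. It requires existence via Galerkin approximation together with the a priori bound, where the a priori bound is exactly \eqref{eq:inf-sup-Y-L}. Alternatively, one can cite the Banach--Ne\v{c}as--Babu\v{s}ka theorem for the pair $(Y_0,X)$, checking surjectivity through the injectivity of the adjoint backward problem. The adjoint problem is injective by the same exponential-weight energy argument, now run backwards in time from the test function's zero final condition.
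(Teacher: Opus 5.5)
Your proof is correct and takes essentially the route the paper indicates (the paper omits the proof). The exponential weight $e^{-2\lambda t}$ with $\lambda=\beta^2/(2\nu)$ is a packaged form of the paper's Gronwall step, and your Riesz-map test for $\partial_t\psi$ reproduces the heat-equation inf-sup argument it cites; \eqref{eq:inf-sup-X-L} then follows by duality as intended, and the auxiliary forward problem you flag is well posed by J.-L.~Lions' theorem because the drift form satisfies a G\aa rding inequality, so there is no gap.
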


We omit the proof of Lemma~\ref{lemma:inf-sup-XY}, since~\eqref{eq:inf-sup-Y-L} is easily deduced from the inf-sup stability of the heat equation, see e.g.~\cite[Theorem~2.1]{ErnSmearsVohralik2017b}, and from Gronwall's inequality, while~\eqref{eq:inf-sup-X-L} follows from~\eqref{eq:inf-sup-Y-L} by duality.
The next Lemma shows the stability of the HJB and KFP when considered separately.

\begin{lemma}[Stability of the HJB and KFP equations] \label{lemma:cts-approx-stab}
For any $(\overline{u},\overline{m}) \in X \times Y$, it holds that
\begin{align}
    \norm{u-\overline{u}}_X &\lesssim \RH + \norm{m-\overline{m}}_{Y}, \label{eq:u-error-stab} \\
    \norm{m-\overline{m}}_Y &\lesssim  \norm{R_2^Y(\overline{u},\overline{m})}_{X^*} + \norm{m \left( \dpH[\nabla \uBar]-\dpH[\nabla u] \right)}_{L^2(Q_T;\R^d)} + \norm{m_0-\overline{m}(0)}_{\Omega}, \label{eq:m-error-stab}
\end{align}
    where the hidden constant of~\eqref{eq:u-error-stab} depends only on $\nu$, $L_H$, $L_F$, $L_S$, $T$, and $\diam \Omega$, and the hidden constant of~\eqref{eq:m-error-stab} depends only on $\nu$, $L_H$, $T$, and $\diam \Omega$.
\end{lemma}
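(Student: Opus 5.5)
Both estimates follow by applying the inf-sup bounds of Lemma~\ref{lemma:inf-sup-XY} to the error identities \eqref{eq:weak-formulation-identity}, each time with a suitable bounded drift $\overline{b}$.

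\textbf{The HJB bound \eqref{eq:u-error-stab}.} The Hamiltonian is Lipschitz in $p$, so we can linearize it exactly. Set $e\coloneqq u-\uBar\in X$ and define
\[
\overline{b}\coloneqq \frac{H[\nabla u]-H[\nabla \uBar]}{\abs{\nabla e}^2}\,\nabla e \quad\text{where } \nabla e\neq 0,
\]
and $\overline{b}\coloneqq 0$ elsewhere. Then $H[\nabla u]-H[\nabla\uBar]=\overline{b}\cdot\nabla e$ and $\norm{\overline{b}}_{L^\infty}\le L_H$ by \eqref{eq:H-cts-condition}. For this $\overline{b}$, the terms of \eqref{eq:weak-formulation-identity-HJB} involving $u-\uBar$ are exactly $B(v,e)$, with $B$ as in Lemma~\ref{lemma:inf-sup-XY}. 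Hence
\[
B(v,e)=\langle R_1^X(\uBar,\mBar),v\rangle-\int_0^T\langle F[\mBar]-F[m],v\rangle\,\dt-(S[\mBar(T)]-S[m(T)],v(T))_\Omega .
\]
We bound the last two terms using:
\begin{itemize}
\item the Lipschitz bounds \eqref{eq:F-cts-condition} and \eqref{eq:S-cts-condition};
\item the embedding $Y\hookrightarrow Z$;
\item the Poincar\'e inequality, which gives $\norm{v}_X\lesssim\norm{v}_Y$;
\item \eqref{eq:Y-embedding} for $v(T)$ and for $\norm{(m-\mBar)(T)}_\Omega\le\norm{m-\mBar}_Y$.
\end{itemize}
Together these give $\abs{B(v,e)}\lesssim(\RH+\norm{m-\mBar}_Y)\norm{v}_Y$. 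Now \eqref{eq:inf-sup-X-L} yields \eqref{eq:u-error-stab}. Its constant depends only on $\nu$, $L_H$, $T$ and $\diam\Omega$, together with $L_F$, $L_S$ and the embedding constant.

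\textbf{The KFP bound \eqref{eq:m-error-stab}.} Let $\psi\coloneqq m-\mBar\in Y$. We split the drift as
\[
m\dpH[\nabla u]-\mBar\dpH[\nabla\uBar]=\psi\,\dpH[\nabla\uBar]+m\bigl(\dpH[\nabla u]-\dpH[\nabla\uBar]\bigr).
\]
Take $\overline{b}\coloneqq\dpH[\nabla\uBar]$, which satisfies $\norm{\overline{b}}_{L^\infty}\le L_H$ by \eqref{eq:dpH-bounded-condition}. Identity \eqref{eq:weak-formulation-identity-KFP} then reads
\[
B(\psi,w)=\langle R_2^Y(\uBar,\mBar),w\rangle-\int_0^T\bigl(m(\dpH[\nabla u]-\dpH[\nabla\uBar]),\nabla w\bigr)_\Omega\,\dt .
\]
By Cauchy--Schwarz,
\[
\sup_{w}\frac{B(\psi,w)}{\norm{w}_X}\le\RK+\norm{m(\dpH[\nabla\uBar]-\dpH[\nabla u])}_{L^2(Q_T;\R^d)} .
\]
Applying \eqref{eq:inf-sup-Y-L}, with $\psi(0)=m_0-\mBar(0)$, gives \eqref{eq:m-error-stab}. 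Its constant depends only on $\nu$, $L_H$, $T$ and $\diam\Omega$, as claimed.

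\textbf{Main obstacle.} The only delicate step is the linearization of $H$ in the HJB part. One must check that the chosen $\overline{b}$ is measurable and bounded by $L_H$, and that the identity $H[\nabla u]-H[\nabla\uBar]=\overline{b}\cdot\nabla e$ holds a.e. This uses only the Lipschitz continuity of $H$, not its differentiability. No nonnegativity of $\mBar$ is needed at this stage, since the coupling terms are simply moved to the right-hand side.
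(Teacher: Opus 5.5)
Your proposal is correct and follows the paper's proof. For the HJB error you use a bounded linearizing drift in $B$ with \eqref{eq:inf-sup-X-L}; for the KFP error you take $\overline{b}=\dpH[\nabla\uBar]$, split the drift term the same way, and apply \eqref{eq:inf-sup-Y-L}. The only difference is that you linearize $H$ with a difference-quotient drift instead of the paper's averaged derivative $\int_0^1 \dpH[\nabla \uBar + s\nabla(u-\uBar)]\,\mathrm{d}s$, so you need only the Lipschitz bound \eqref{eq:H-cts-condition}. Your sign convention $H[\nabla u]-H[\nabla\uBar]=\overline{b}\cdot\nabla(u-\uBar)$ is also the one that makes $B(v,u-\uBar)$ match \eqref{eq:weak-formulation-identity-HJB}; the paper's $\overline{b}_1$ carries a harmless extra minus sign.
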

\begin{proof}
Let $\overline{b}_1 \in L^{\infty}(Q_T;\R^d)$ be defined by $\overline{b}_1 \coloneqq -\int_0^1 \dpH[\nabla \uBar + s(\nabla u - \nabla \uBar)]~ds$ and let  $B_1(\cdot,\cdot)$ denote the bilinear form of Lemma~\ref{lemma:inf-sup-XY} corresponding to the choice of $b=\overline{b}_1$. Notice that
     \begin{equation}
        H[\nabla \overline{u}]- H[\nabla u] + \overline{b}_1 \cdot \nabla (\overline{u}-u) = 0, \quad \text{a.e. in } Q_T, \quad \norm{\overline{b}_1}_{L^\infty(Q_T;\R^d)} \leq L_H. \label{eq:proof:mean-value-H}
    \end{equation}
Using~\eqref{eq:weak-formulation-identity-HJB} and \eqref{eq:proof:mean-value-H}, we obtain the identity
\begin{multline}
    B_1(v,u-\uBar) = \dualprod{R_1^X(\uBar,\mBar)}{v}{Y_0^* \times Y_0} 
    \\ + \int_0^T \dualprod{F[m]-F[\mBar]}{v}{}\dt + \LTwoInner{S[m(T)]-S[\mBar(T)]}{v(T)}{\Omega} \quad \forall v \in Y_0. \label{eq:proof:u-error-bound-1}
\end{multline}
Applying triangle and Cauchy--Schwarz inequalities, the continuity conditions of $F$ and $S$, and the embedding $Y \hookrightarrow Z$ to~\eqref{eq:proof:u-error-bound-1} then substitution into the bound~\eqref{eq:inf-sup-X-L}, recalling the definition of the $Y$-norm in~\eqref{eq:Y-norm-def}, yields~\eqref{eq:u-error-stab}.

We now prove~\eqref{eq:m-error-stab}. Let $\overline{b}_2\coloneqq \dpH[\nabla \uBar]$ and $B_2(\cdot,\cdot)$ denote the corresponding bilinear form of Lemma~\ref{lemma:inf-sup-XY} where $b=\overline{b}_2$. We similarly recover the KFP residual from the upper bound term of~\eqref{eq:inf-sup-Y-L} using~\eqref{eq:weak-formulation-identity-KFP}
\begin{equation}
    B_2(m-\mBar,w) = \dualprod{R_2^Y(\uBar,\mBar)}{w}{X^* \times X} 
    + \int_0^T \LTwoInner{m \left( \dpH[\nabla \uBar]-\dpH[\nabla u] \right)}{\nabla w}{\Omega}\dt,\label{eq:proof:m-error-bound-1}
\end{equation}
for all $w \in X$.
Substitution of~\eqref{eq:proof:m-error-bound-1} into~\eqref{eq:inf-sup-Y-L}, followed by applications of triangle and Cauchy--Schwarz inequalities leads to~\eqref{eq:m-error-stab}.
\end{proof}

As will become clear in the proof of Theorem~\ref{theorem:equivalence-result}, it is important that we do not further bound the second term in the upper bound of~\eqref{eq:m-error-stab} using the continuity of $\dpH$ and the boundedness of $m$. 

\subsection{Proof of the equivalence result}\label{sec:Equivalence-proof}
\begin{proof}[Proof of Theorem~\ref{theorem:equivalence-result}]
   
    A direct application of Lemma~\ref{lemma:residual-bounds} to~\eqref{eq:calR-def}, followed by the embedding~\eqref{eq:Y-embedding}, provides
    \begin{equation*}
        \calR(\uBar,\mBar) \lesssim \norm{u-\uBar}_X + \norm{m-\mBar}_Y + \norm{\mBar(0)-m_0}_\Omega   \lesssim \norm{u-\uBar}_X + \norm{m-\mBar}_Y,
    \end{equation*}
    for any pair $(\uBar,\mBar) \in X \times Y$.
    This provides the lower bound claimed in~\eqref{eq:equivalence}.
    Adding the stability bounds of~\eqref{eq:u-error-stab} and~\eqref{eq:m-error-stab} provides
    \begin{equation*}
        \norm{u-\uBar}_X + \norm{m-\mBar}_Y \lesssim \calR(\uBar,\mBar)+ \norm{m-\overline{m}}_{Y}+ \norm{m \left( \dpH[\nabla \uBar]-\dpH[\nabla u] \right)}_{L^2(Q_T;\R^d)}.
    \end{equation*}
    Next we apply again the stability of the KFP equation~\eqref{eq:m-error-stab} and bound the resulting terms by $\calR(\uBar,\mBar)$ to get
    \begin{equation}
        \norm{u-\uBar}_X + \norm{m-\mBar}_Y \lesssim \calR(\uBar,\mBar)
        + \norm{m \left( \dpH[\nabla \uBar]-\dpH[\nabla u] \right)}_{L^2(Q_T;\R^d)}, \label{eq:proof:drift-error-bound}
    \end{equation}
    for any pair $(\uBar,\mBar) \in X \times Y$. 
    We now show how to bound the last term in the right-hand side of~\eqref{eq:proof:drift-error-bound}.
    Let $\xi \in Y$ be defined as the weak solution of  $\int_0^T \dualprod{\partial_t \xi}{w}{} + \LTwoInner{\nu \nabla \xi}{\nabla w}{\Omega} =0$ for all $w \in X$, with initial condition $\xi(0)=m_0 - \mBar(0)$ in $\Omega$.
    Note that ~$\norm{\xi}_Y=\norm{m_0-\mBar(0)}_\Omega$ (cf. \cite[Theorem~2.1]{ErnSmearsVohralik2017b}), and thus $\norm{\xi}_Y \leq \calR(\uBar,\mBar)$ by definition of $\calR(\cdot,\cdot)$. 
    Let $\mBar_\xi \coloneqq \mBar + \xi$. We test~\eqref{eq:weak-formulation-identity-HJB} with $v=\mBar_\xi-m \in Y_0$, and~\eqref{eq:weak-formulation-identity-KFP} with $w=\uBar-u \in X$, then subtract the resulting equations to obtain
    \begin{equation*}
        \dualprod{R_1^X(\uBar,\mBar)}{\mBar_\xi-m}{Y_0^* \times Y_0} - \dualprod{R_2(\uBar,\mBar)}{\uBar-u}{X^* \times X} = I_1+I_2 +I_3,
    \end{equation*}
    where the terms on the right-hand side above are defined by
    \begin{align*}
        I_1 &\coloneqq  \int_0^T \dualprod{F[\mBar]-F[m]}{\mBar-m}{}\dt+ \LTwoInner{S[\mBar(T)]-S[m(T)]}{\mBar(T)-m(T)}{\Omega}, \\
        I_2 &\coloneqq \int_0^T \LTwoInner{m}{D_H[\nabla \uBar,\nabla u]}{\Omega} + \LTwoInner{\mBar}{D_H[\nabla u,\nabla \uBar]}{\Omega}\dt, \\
        I_3 &\coloneqq  \int_0^T \dualprod{F[\mBar]-F[m]}{\xi}{}\dt+ \LTwoInner{S[\mBar(T)]-S[m(T)]}{\xi(T)}{\Omega} \\
        &\qquad + \int_0^T \LTwoInner{H[\nabla u]-H[\nabla \uBar]}{\xi}{\Omega}\dt.
    \end{align*} 
    Recall that the Bregman divergence terms $D_H$ in the above expansion are defined in~\eqref{eq:Bregman-div-def}.
    Note that the monotonicity condition in~\eqref{eq:F&S-monotonicity-condition} implies that $I_1 \geq 0$.
    Also note that the nonnegativity of $m$, $\mBar$, and of the Bregman divergence terms, imply that $I_2 \geq \int_0^T \LTwoInner{m}{D_H[\nabla \uBar,\nabla u]}{\Omega}\dt$.
    Therefore, we apply the $L^\infty$-bounds of~\eqref{eq:Bregman-inequality} and~\eqref{eq:M_infty} to obtain a lower bound on $I_2$
    \begin{align*}
         \norm{m \left( \dpH[\nabla \uBar]-\dpH[\nabla u] \right)}_{L^2(Q_T;\R^d)}^2 &\leq M_\infty \int_{Q_T} m \abs{\dpH[\nabla \uBar]-\dpH[\nabla u] }^2\dx\dt \nonumber \\
         &\leq 2 L_{H_p} M_\infty\int_0^T \LTwoInner{m}{D_H[\nabla \uBar,\nabla u]}{\Omega}\dt \lesssim I_2.
    \end{align*}
    Lipschitz continuity of~$H$,~$F$, and~$S$, and the embedding~\eqref{eq:Y-embedding} imply that
    \begin{equation*}
        \abs{I_3} \lesssim (\norm{m-\mBar}_Y + \norm{u-\uBar}_X) \norm{\xi}_Y \lesssim (\norm{m-\mBar}_Y + \norm{u-\uBar}_X) \calR(\uBar,\mBar),
    \end{equation*}
    where we have used the bounds of $\xi$ as shown above.
    Note that the definition of $\mBar_\xi$ and $\calR(\uBar,\mBar)$ imply that
    \begin{align*}
         \dualprod{R_1^X(\uBar,\mBar)}{\mBar_\xi-m}{Y_0^* \times Y_0} &\leq \RH \norm{\mBar_\xi-m}_Y \\
         &\leq \RH (\norm{\mBar-m}_Y + \norm{\xi}_Y) \\
         &\leq \calR(\uBar,\mBar) \norm{\mBar-m}_Y +  [\calR(\uBar,\mBar)]^2.
    \end{align*}
    Therefore, we deduce the bound
    \begin{equation*}
    \norm{m \left( \dpH[\nabla \uBar]-\dpH[\nabla u] \right)}_{L^2(Q_T;\R^d)}^2 \lesssim [\calR(\uBar,\mBar)]^2 +  \calR(\uBar,\mBar) \left(  \norm{u-\uBar}_X +  \norm{m-\mBar}_Y \right). \label{eq:proof:drift-error-bound-2}
\end{equation*}
    Then, we use~\eqref{eq:proof:drift-error-bound} and the above inequality to get 
    \begin{align*}
        \norm{u-\uBar}_X + \norm{m-\mBar}_Y&\lesssim \calR(\uBar,\mBar) + [\calR(\uBar,\mBar)]^{\frac{1}{2}} \left( \norm{u-\uBar}_X + \norm{m-\mBar}_Y \right)^{\frac{1}{2}}.
    \end{align*}
    We then apply Young's inequality (with a suitable parameter) on the final term of the right-hand side above to deduce that $ \norm{u-\uBar}_X + \norm{m-\mBar}_Y \lesssim \calR(\uBar,\mBar)$, thus completing the proof of Theorem~\ref{theorem:equivalence-result}.
    \end{proof}
 

\section{Stabilized finite element methods}\label{sec:FEM-scheme}
In this section, we present the numerical framework of the stabilized FEM used to approximate the MFG system \eqref{eq:MFG-system}.

\subsection{Spatial discretization}
To avoid unnecessary technicalities, we shall only consider a fixed, time-independent spatial discretization of $\Omega$. 
Let $\mathcal{T}$ denote a conforming simplicial mesh of the domain $\Omega$. 
We adopt the convention that each element $K\in\T$ is closed.
For each element $K\in\T$, we let $h_K$ denote the diameter of $K$ and $\varrho_K$ denote the largest diameter of an inscribed ball in $K$. 
Let $\theta_\T\coloneqq \max_{K\in\T} h_K/\rho_K$ denote the shape-regularity parameter of the mesh~$\T$.
Let $\calV$ denote the set of all vertices of $\T$ and let $\calV^I=\calV \cap \Omega$ denote the set of interior vertices of $\T$.
Two distinct vertices are called neighbours if they belong to a common element of $\T$.
Let $\calE$ denote the set of edges of the mesh $\T$, i.e.\ the set of all closed line segments formed by all pairs of neighbouring vertices.
Given a simplex $K\in\mathcal{T}$ we denote the set of edges contained in $K$ by $\calE_K$. 
Let $\calF$ denote the set of all faces of $\T$, and let $\calF^I$ denote the subset of all interior faces of $\calF$, i.e.\ all faces that are not contained in $\partial\Omega$.
Note that for $d=2$, edges and faces of the mesh coincide.
For each face $F\in\calF$, let $h_F$ denote the diameter of $F$.
For each element $K\in\T$, we let $\calF^I_K$ denote the set of interior faces $F\in\calF^I$ that are contained in $K$.
For each face $F\in\calF$, we let $\TF$ denote the set of elements that contain $F$, and we let $\omega_F \coloneqq \bigcup_{K\in\TF}K$ denote its associated patch.
For each element $K\in\T$, we define the set $\tTK$ of face-sharing neighbouring elements and the associated patch $\omega_K$
\begin{equation}
\begin{aligned}
\tTK \coloneqq \bigcup_{F\in\calFKi}\TF, &&& \omega_K\coloneqq \bigcup_{K^\prime\in\tTK} K^\prime = \bigcup_{F\in\calFKi}\omega_F.
\end{aligned}
\end{equation}
Note that $K\in\tTK$ for all $K\in\T$.

Let $V_h \subset H^1_0(\Omega)$ denote the linear Lagrange finite element space constructed over $\T$ defined by
\begin{equation}
    V_h \coloneqq \{ v_h \in H^1_0(\Omega)~ :~ v_h |_{K} \in \mathcal{P}_1(K) \quad \forall K \in \T \}, \label{eq:fem-space-def}
\end{equation}
where $\mathcal{P}_1(K)$ denotes the space of linear polynomials on $K$. 
Let $\{\varphi_z \}_{z \in \calV^I}$ denote the set of standard Lagrangian basis functions characterised by $\varphi_z(z) = 1$ and $\varphi_z(z')=0$ for all $z' \in \calV \backslash \{ z \}$.

\subsection{Space-time finite element spaces}
Let $\mathcal{J} \coloneqq \{ I_n \}_{n=1}^N$ denote a partition of $\overline{I} = [0,T]$ into intervals $I_n \coloneqq (t_{n-1},t_n)$ with $0=t_0 \leq t_{n-1} < t_n \leq t_N$ for each $n \in \{1,...,N\}$. For each $n \in \{1,...,N\}$, the time-step size is denoted by $\tau_n \coloneqq t_n - t_{n-1}$.
Let $\V$ denote the space of $V_h$-valued functions
defined on $[0,T]$ that are piecewise constant with respect to $\mathcal{J}$. In other words, $\V$ is defined by
\begin{equation}
    \V \coloneqq \left\{ \vht \in X\,:\, \vht |_{I_n} \in V_h \text{ is constant }  \forall n \in \{1,...,N \} \right\}.
\end{equation}
The implicit Euler time stepping method naturally leads to numerical solutions that can be viewed as pairs of functions in $\V$.
However, given the forward-backward structure of the equations of the MFG system with initial and final time conditions on the different solution components, it will be natural to view the numerical solutions as functions that are defined everywhere on $[0,T]$, with different  left- and right- continuity properties between the value function and density, owing to the different time directions of the equations.
For a function $\vht \in \V$, let $\vht(t_n^-)$ and $\vht(t_n^+)$ denote respectively the left- and right-limits of $\vht$ at time $t_n$. Also note that $\vht(t_n^+) = \vht|_{I_{n+1}}$ and $\vht(t_n^-) = \vht|_{I_n}$ for all $n \in \{0,...,N-1\}$, and $\vht(0^+) = \vht|_{I_1}$ and  $\vht(T^-) = \vht|_{I_N}$.
We say that $\vht$ is left-continuous if $\vht(t_n) = \vht(t^-_n)$ for all $n \in \{1,...,N \}$ and right-continuous if $\vht(t_n) = \vht(t^+_n)$ for all $n \in \{0,...,N-1 \}$. 
We define the right- and left- continuous finite element spaces by
\begin{equation}\label{eq:signed_FEM_spaces}
    \begin{aligned}
        \Vone &\coloneqq \{\vht:[0,T] \to V_h,~ \vht|_{(0,T)} \in \V ~:~   \vht \text{ is right-continuous} \}, \\
        \Vtwo &\coloneqq \{ \vht:[0,T] \to V_h,~ \vht|_{(0,T)} \in \V ~:~  \vht \text{ is left-continuous} \}.
    \end{aligned}
\end{equation}
We emphasize that functions in $\Vi$ are considered to be defined at all points in $[0,T]$, and two functions that agree on up to a null set of $[0,T]$ are not identified. We take this viewpoint in order to handle the initial and final time conditions of the problem in a natural manner.
In order to alleviate the notation, the dependence of the spaces on the mesh~$\T$ and time partition~$\mathcal{J}$ will be left implicit.
Note that the spaces~$\Vi$ defined above are only generally conforming with respect to the space~$X$, and not~$Y$, since the spaces $\Vi$ contain functions that may be discontinuous at the timestep points $t_n \in (0,T)$. 
Therefore, in order to consider the error between the exact and discrete solutions, we consider the sum space~$\Vi + Y$ defined by
\begin{equation} \label{eq:space-time-fem-space-defs}
    \Vi+Y \coloneqq \left\{ w \in X~:~ \exists (\vht,v) \in \Vi \times Y~\mathrm{s.t.}~ w = \vht + v \right\}.
\end{equation}
Note that the spaces $\Vi +Y$ are not generally direct sum spaces.
For a given function $w \in \Vi +Y$ and time level $t_n \in (0,T)$, the temporal jump operators are defined by
\begin{equation}
    \timejump{w}_n \coloneqq w(t_n^-) - w(t_n^+) \quad \forall n\in \{1,\ldots,N-1\}.
\end{equation}
Moreover, the jumps at the initial and final times are defined by $\timejump{w}_0 = w(0) - w(0^+)$ and $\timejump{w}_N = w(T^-) - w(T)$ respectively.
The temporal reconstruction operators $\calI_i : \Vi + Y \to Y$ are defined by
\begin{equation} \label{eq:Reconstruction-defs}
    \begin{aligned}
        (\calI_1 w)(t) &\coloneqq w(t) - \frac{t-t_{n-1}}{\taun} \timejump{w}_n,~~t \in [t_{n-1},t_n),~ n \in \{1,...,N\},~ w \in \Vone +Y, \\
        (\calI_2 w)(t) &\coloneqq w(t) + \frac{t_n-t}{\taun} \timejump{w}_{n-1}, ~~ t \in (t_{n-1},t_n],~ n \in \{1,...,N\},~ w \in \Vtwo +Y.
    \end{aligned}  
\end{equation}
It is easy to check that the operators $\calI_i$ are linear, and moreover take values in $Y$ as a result of the fact that they map into spaces of functions that are continuous and piecewise smooth in time.
Also, by the embedding of $Y \hookrightarrow C([0,T];L^2(\Omega))$, it is clear that $\calI_i w = w$ for any $w \in Y$, since the vanishing jumps $\timejump{w}_n=0\in H^1_0(\Omega)$ vanish for any $w\in Y$.
We equip the spaces $\Vi+Y$ with an extended $Y$-norm defined by
\begin{equation} \label{eq:error-norm-defs}
\begin{aligned}
    \norm{v}_{\Vi+Y} \coloneqq \norm{v-\calI_i v}_X + \norm{\calI_i v}_Y, \quad \forall v \in \Vi + Y.
\end{aligned}
\end{equation}
The norm defined above is called the extended $Y$-norm since it defines a consistent extension of the norm $\norm{\cdot}_Y$ of~\eqref{eq:Y-norm-def} from the space~$Y$ to the sum space~$\Vi+Y$, which is to say that $\norm{v}_{\Vi+Y} = \norm{v}_Y$ for any $v \in Y$.

\subsection{Discretized problem}
As explained in the introduction, we consider here a family of stabilized FEM, which includes for instance the method in~\cite{OsborneSmears2025i}, based on an implicit Euler discretization in time, and a piecewise affine FEM is used in space. The stabilization is motivated by the need to preserve nonnegativity of the approximate densities, which plays an important role in the uniqueness of numerical solutions and also in the nonnegativity condition appearing in Theorem~\ref{theorem:equivalence-result}.
We now consider abstract stabilization terms of the form $\calS_i(\cdot;\cdot; \cdot): \Vone \times \Vtwo \times \V \to \R$, $i \in \{ 1,2\}$ that are allowed to be nonlinear with respect to first two arguments, but are assumed to be linear with respect to the third argument. 
Example~\ref{ex:stabilizations} below details some concrete  stabilizations that we have in mind.

The class of FEM that we consider are of the form: find $(\uht,\mht)\in \Vone\times \Vtwo$ such that
\begin{subequations}\label{eq:FEM-scheme-def}
\begin{flalign}
    &\int_0^T \LTwoInner{-\partial_t \Uht}{\vht}{\Omega} + \LTwoInner{\nu \nabla \uht}{\nabla \vht}{\Omega} + \LTwoInner{H[\nabla \uht]}{\vht}{\Omega}\dt  \\
     &\hspace{3.1cm}  + \calS_1(\uht;\mht;\vht) = \int_0^T \dualprod{F[\mht]}{\vht}{} \dt \notag 
      \\
    &\int_0^T \LTwoInner{\partial_t \Mht}{\vht}{\Omega} + \LTwoInner{\nu \nabla \mht}{\nabla \vht}{\Omega} + \LTwoInner{\mht \dpH[\nabla \uht]}{\nabla \vht}{\Omega}\dt \\
 & \hspace{3.1cm}+ \calS_2(\uht;\mht;\vht) = \int_0^T \dualprod{G}{\vht}{} \dt, \notag 
\end{flalign}
\end{subequations}
for all $\vht\in \V$, where we use the shorthand notation $\Uht \coloneqq \calI_1 \uht$ and $\Mht\coloneqq \calI_2 \mht$, and such that $\mht(0) = \Pi_0 m_0$ and $\uht(T) = \Pi_T S[\mht(T)]$, where~$\Pi_0$ and~$\Pi_T$ are some quasi-interpolation operators from~$L^2(\Omega)$ to the finite element space~$V_h$ that are chosen by the user to approximate the initial and final time conditions.
In practice, the stabilizations $\mathcal{S}_i$ are chosen to ensure good properties of the FEM~\eqref{eq:FEM-scheme-def}, such as existence and uniqueness of the numerical solution, and stability, see~\cite{OsborneSmears2025i} for further details.
However, the only assumptions that we require for the general \emph{a posteriori} error analysis of Section~\ref{sec:a-post-bounds} below is that a solution of~\eqref{eq:FEM-scheme-def} exists, and that discrete densities are nonnegative:\smallskip
\begin{enumerate}[label={(H\arabic*)},resume]
\item \label{H:stabilization_main} if $(\uht,\mht)\in \Vone\times \Vtwo$ is a solution of~\eqref{eq:FEM-scheme-def} then $\mht \geq 0$ in~$Q_T$.
\end{enumerate}\smallskip 

\noindent Observe that if $\mht$ is nonnegative in~$Q_T$ then so is $\Mht\coloneqq \calI_2\mht $.
\begin{example}[Examples of stabilizations]\label{ex:stabilizations}
An example of a choice of stabilizations $\mathcal{S}_i$ that satisfies Hypothesis~\ref{H:stabilization_main} can be found in~\cite{OsborneSmears2025i}.
There, the stabilization is based on mass lumping of the $L^2$ inner-product for the temporal derivative terms and linear stabilization of spatial first-order derivative terms, resulting in a discrete maximum principle (DMP) for the discrete scheme.
In particular, the stabilizations in~\cite{OsborneSmears2025i} are of the form
\begin{equation}\label{eq:Stab-example-defs}
\begin{aligned}
    \calS_1(\uht;\mht;\vht) &\coloneqq \int_0^T \LTwoInner{\partial_t \Uht}{\vht}{\Omega} - \LTwoInner{\partial_t \Uht}{\vht}{\Omega,h} + (D_h \nabla \uht, \nabla \vht)_\Omega\dt,\\
    \calS_2(\uht;\mht;\vht) &\coloneqq \int_0^T \LTwoInner{\partial_t \Mht}{\wht}{\Omega,h} - \LTwoInner{\partial_t \Mht}{\wht}{\Omega} + (D_h \nabla \mht, \nabla \wht)_\Omega\dt,
\end{aligned}
\end{equation}
where $(\cdot,\cdot)_{\Omega,h} : V_h \times V_h \to \R$ denotes the mass-lumped inner-product defined by
\begin{equation}\label{eq:mass-lump-def}
    \massLumped{v_h}{w_h} \coloneqq \sum_{z \in \calV^I} \LTwoInner{\varphi_z}{1}{\Omega} v_h(z)~ w_h(z) \quad \forall v_h, w_h \in V_h,
\end{equation}
and where $D_h \in L^\infty(\Omega; \R^{d\times d}_{\mathrm{sym}})$ is a piecewise constant symmetric matrix-valued function of the form $D_h |_K = \sum_{E \in \calE_K} \gamma_E t_E \otimes t_E$ for each element $ K \in \T $, where $\calE_K$ denotes the set of edges of $K$ (i.e.\ line segments between vertices), and $t_E$ is a chosen unit tangent vector to edge $E$, and $\gamma_E \geq 0$ is a suitably chosen weight.
It was shown in~\cite[Theorem~4.2]{OsborneSmears2024i} that, under a suitable condition on the meshes and for weights $\gamma_E$ chosen to be of the same order as the mesh-size, the stabilized schemes satisfy Hypothesis~\ref{H:stabilization_main}.
\end{example}

\section{A posteriori error bounds}\label{sec:a-post-bounds}
In this section we present the general analysis of \emph{a posteriori} error estimators for the class of FEM schemes defined above.
For the sake of brevity of exposition, we shall assume in the following that the coupling term $F$ has images in $L^2(Q_T)$ for all arguments in $X$, and that $G\in L^2(Q_T)$.
We refer the reader to~\cite{ErnSmearsVohralik2017,KreuzerVeeser2021} and references therein for further details on the treatment of PDE with $H^{-1}$ source terms.

\subsection{The estimators}\label{sec:estimator-defs}
We start by defining the estimators that appear in the error bound.
The \emph{a posteriori} estimator $\eta(\uht,\mht)$ is defined by
\begin{equation}\label{eq:eta-total-def}
    \eta(\uht,\mht) \coloneqq \sum_{i=1}^2 \left[ \etaRecon{i} + \etaRes{i} +\etaStab{i} \right]+ \eta_0 + \eta_T,
\end{equation}
where the $\etaRecon{i}$ denote the temporal jump estimators, the~$\etaRes{i}$ denote the PDE residual estimators, the~$\etaStab{i}$ denote the stabilization estimators, and $\eta_0$ and $\eta_T$ denote the initial and final time condition estimators.
These terms are defined respectively in \eqref{eq:etaRecon-global-defs}, \eqref{eq:eta-res-defs}, \eqref{eq:etaStab-defs} and \eqref{eq:eta0-T-defs} below.
To alleviate the notation, the dependency of the above estimators on the numerical solution is left implicit.

\paragraph{The temporal jump estimators}
The temporal jump estimators $\etaRecon{i}$ measure the lack of conformity in the space $Y$ of the functions $\uht$ and $\mht$, which are generally discontinuous across time-intervals.
Let $\etaRecon{i}$, $i\in \{1,2\}$ be defined by
\begin{equation}
    \etaRecon{i}^2 \coloneqq \sum_{n=1}^N \sum_{K\in \T} \etaRecon{K,n,i}^2, 
    \quad i \in \{1,2\}. \label{eq:etaRecon-global-defs}
\end{equation}
where the local contributions are given by $\etaRecon{K,n,1} \coloneqq \norm{\nabla (\uht-\Uht)}_{L^2(I_n \times K)}$ and $\etaRecon{K,n,2} \coloneqq \norm{\nabla(\mht-\Mht)}_{L^2(I_n \times K)}$ for each time-interval $I_n$, $n\in\{1,\dots,N\}$, and each element $K\in \T$.
The temporal jump estimators can be evaluated straightforwardly through the formulas
\begin{equation}
\begin{aligned}
 [\etaRecon{1}]^2=\frac{1}{3}\sum_{n=1}^N \tau_n \norm{\nabla \timejump{\uht}_n  }_\Omega^2, &&&
 [\etaRecon{2}]^2=\frac{1}{3}\sum_{n=1}^N \tau_n \big\lVert \nabla \timejump{\mht}_{n-1}  \big\rVert_\Omega^2 .
 \end{aligned}
\end{equation}

\paragraph{The residual estimators}
For each time-interval~$I_n$, $n\in\{1,\dots,N\}$, and element~$K\in \T$, we define the local volume residuals by
\begin{subequations}\label{eq:volume-residual-defs}
\begin{align}
r_{K,n,1} &\coloneqq  F[\mht] + \partial_t \Uht + \nu \Delta (\uht|_K) -  H[\nabla \uht], \\ 
r_{K,n,2} &\coloneqq  G - \partial_t  \Mht  + \nu \Delta (\mht|_K) +  \mathrm{div}(\mht  \dpH[\nabla \uht])  .
\end{align}
\end{subequations}
Since the functions $\uht$ and $\mht$ are piecewise affine on each element of the mesh, the Laplacian terms in the volume residuals above vanish; however we choose to write them explicitly to emphasize the relation of the volume residuals to the strong form of the original PDE.
Also, for each interior face $F \in \calF^I$, we define the local spatial jump residuals by
\begin{subequations} \label{eq:spatial-jump-residual-defs}
    \begin{align}
         j_{F,n,1} &\coloneqq \nu \jump{\nabla \uht \cdot n_F}_{F}, \label{eq:spatial-jump-residual-hjb-def}\\
        j_{F,n,2} &\coloneqq \nu \jump{\nabla \mht \cdot n_F}_{F} +  \mht \jump{\dpH[\nabla \uht] \cdot n_F}_{F}. \label{eq:spatial-jump-residual-kfp-def}
    \end{align}
\end{subequations}
For each $i\in\{1,2\}$, the total residual estimator $\etaRes{i}$ is defined by
\begin{gather} 
    \etaRes{i}^2 \coloneqq \sum_{n=1}^N \sum_{K \in \T} \etaRes{K,n,i}^2, \label{eq:eta-res-defs}
    \\
   \etaRes{K,n,i}^2 \coloneqq h_K^2 \norm{r_{K,n,i}}^2_{L^2(I_n \times K)} + \sum_{F \in \calF_K^I} h_F \norm{j_{F,n,i}}_{L^2(I_n \times F)}^2. \label{eq:eta-res-local-defs}
\end{gather}

\paragraph{The stabilization estimators}
The stabilization estimators are defined by
\begin{align}
    \etaStab{i} \coloneqq \sup_{\vht \in\V\setminus\{0\}} \frac{\calS_i(\uht;\mht;\vht)}{\norm{\vht}_X} \quad \forall i \in \{ 1,2 \}.\label{eq:etaStab-defs}
\end{align}
Since the stabilizations are linear in the third argument, the stabilization estimators are computable in practice by solving a discrete linear elliptic problem in the finite element space $V_h$ on each time-interval. We refer the reader also to~\cite[Remark 5.2]{OsborneSmearsWells2025} for further discussion of how the computations can be made more efficient in practice by approximating the stabilization estimators via standard preconditioners for elliptic problems.
However, for the general class of stabilizations above, it does not appear to be possible to localize the stabilization estimators across the spatial mesh in general. 
This motivates the further analysis in Section~\ref{sec:local-estimators}, where it will be seen that the computation of the stabilization estimators can be avoided in practice if the stabilizations have some additional structure.

\paragraph{Initial and final time condition estimators}
Recall that the FEM scheme~\eqref{eq:FEM-scheme-def} imposes the initial condition $\mht(0)=\Pi_0 m_0$ and final time condition $\uht(T)=\Pi_T S[\mht(T)]$ for some user-chosen operators $\Pi_0$ and $\Pi_T$. 
The initial- and final-time condition estimators are defined by
\begin{equation}
    \eta_0 \coloneqq \norm{m_0 -\Pi_0 m_0}_\Omega, \quad \eta_T \coloneqq \norm{S[\mht(T)]-\Pi_TS[\mht(T)]}_\Omega. \label{eq:eta0-T-defs}
\end{equation}
One could alternatively consider the terms $\eta_0$ and $\eta_T$ as data oscillation terms. There is no practical difference in the different point of views, yet we choose to consider these terms as estimators since one can show their efficiency properties, see Remark~\ref{rem:efficiency_initial_final} and Theorem~\ref{theorem:global-efficiency} below.

\paragraph{Data oscillation terms}
Finally, we define the temporal and spatial residual estimators that will appear below in Theorems~\ref{theorem:reliability} and Theorems~\ref{theorem:local-efficiency} and~\ref{theorem:global-efficiency} respectively. 
We note that it is not generally possible to avoid the dependence of the data oscillation on the numerical solution, owing to the nonlinearity of the problem.
To define the temporal data oscillation terms, let $\Pi_\tau$ denote the temporal $L^2$-orthogonal projection onto piecewise constant functions with respect to the time partition. In other words, for each $v\in L^2(0,T)$, let $\Pi_\tau v\in L^2(0,T)$ is defined by $\Pi_\tau v|_{I_n}\coloneqq \frac{1}{\tau_n}\int_{I_n} v\mathrm{d}t$ for each time-interval $I_n$, $n\in\{1,\dots,N\}$. 
In a slight abuse of notation, we extend $\Pi_\tau$ without change of notation to functions in general Bochner spaces $L^2(0,T;W)$ where $W$ is any Banach space.
We now define the temporal data oscillation terms by
\begin{subequations}\label{eq:temporal_oscillations}
\begin{align}
[\osc_{\tau,1}]^2 &\coloneqq \int_0^T \norm{ (\mathrm{I}-\Pi_\tau)\left(F[\mht]-H[\nabla \uht]\right)}_{H^{-1}(\Omega)}^2\mathrm{d}t, 
\\
[\osc_{\tau,2}]^2& \coloneqq \int_0^T \norm{(\mathrm{I}-\Pi_\tau)G}_{H^{-1}(\Omega)}^2 + \norm{\mht (\mathrm{I}-\Pi_\tau)\dpH[\nabla \uht] }_{\Omega}^2 \mathrm{d}t,
\end{align}
\end{subequations}
where $\mathrm{I}$ denotes the identity operator.
To define the spatial data oscillation terms, we consider a fixed but arbitrary polynomial degree $\kappa \geq 0$ and we define the following spatial approximation operators. 
For each $K \in \T$ and $F \in \calF^I$, let $\Pi_{K,\kappa}:L^2(K) \to \mathcal{P}_\kappa(K)$ and $\Pi_{F,\kappa}:L^2(F) \to \mathcal{P}_\kappa(F)$ denote the $L^2$-projections onto polynomials of degree $\kappa \geq 0$ over $K$ and $F$ respectively.
For each element $K \in \T$ and time interval $n \in \{1,...,N\}$ we define the local data oscillation terms by
\begin{multline}
        [\osc_{K,n,\kappa,i}]^2 \coloneqq \sum_{K' \in \widetilde{\T}_K} \int_{I_n} \Big[ h_{K'}^2 \norm{r_{K',n,i}-\Pi_{K',\kappa} r_{K',n,i}}_{K'}^2 \\
        +  \sum_{F \in \calF_{K'}^I} h_F \norm{j_{F,n,i}-\Pi_{F,\kappa} j_{F,n,i}}_F^2 \Big]\dt \quad i \in \{1,2\}.
\end{multline}
For each $i  \in \{1,2\}$, we define the global spatial oscillation terms by~$[\osc_{\kappa,i}]^2\coloneqq \sum_{n=1}^N \sum_{K \in \T} [\osc_{K,n,\kappa,i}]^2$.

\subsection{\emph{A posteriori} error bounds}

The first main result, given in Theorem~\ref{theorem:reliability} below, shows the reliability of the estimator defined in~\eqref{eq:eta-total-def}, i.e.\ the estimator bounds from above the extended norm of the error, as defined in~\eqref{eq:error-norm-defs}, up to a multiplicative constant and temporal data oscillation terms. 

\begin{theorem}[Reliability]\label{theorem:reliability}
    Let $(\uht,\mht) \in \Vone \times \Vtwo$ be the numerical solution of the FEM scheme in~\eqref{eq:FEM-scheme-def} that satisfies \ref{H:stabilization_main}. Then it holds that
    \begin{equation}
      \norm{u-\uht}_{\Vone+Y} + \norm{m-\mht}_{\Vtwo + Y} \lesssim \eta(\uht,\mht) +  \sum_{i=1}^2  \osc_{\tau,i}, \label{eq:V+Y-reliability}
    \end{equation}
    where the hidden constant depends only on $d$, $\nu$, $L_H$, $L_{H_p}$, $L_F$, $L_S$, $M_\infty$, $\diam \Omega$, $T$, and the shape-regularity of $\T$.
\end{theorem}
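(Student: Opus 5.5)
The plan is to apply Theorem~\ref{theorem:equivalence-result} to the pair $(\uht,\Mht)\in X\times Y$, which is admissible because $\Mht\geq 0$ by \ref{H:stabilization_main}. We first split the extended norms. By definition~\eqref{eq:error-norm-defs} and since $\calI_i u=u$, $\calI_i m=m$, we have $\norm{u-\uht}_{\Vone+Y}\leq \norm{\uht-\Uht}_X+\norm{u-\Uht}_Y$. The first term equals $\etaRecon{1}$. For the second term we use $\norm{u-\Uht}_Y\lesssim \norm{u-\Uht}_X+\norm{\partial_t(u-\Uht)}_{X^*}+\norm{u(T)-\Uht(T)}_\Omega$. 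The $X$-part is bounded by $\norm{u-\uht}_X+\etaRecon{1}$. The time-derivative part is handled through the HJB equation~\eqref{eq:HJB_time_strong_form}, tested against the strong form of the discrete residual below; this uses Lipschitz continuity of $H$ and $F$. The final value is $\Uht(T)=\uht(T)=\Pi_T S[\mht(T)]$, so $\norm{u(T)-\Uht(T)}_\Omega\leq \norm{S[m(T)]-S[\mht(T)]}_\Omega+\eta_T\lesssim\norm{m-\Mht}_Y+\eta_T$, since $\Mht(T)=\mht(T)$. Similarly, $\norm{m-\mht}_{\Vtwo+Y}=\etaRecon{2}+\norm{m-\Mht}_Y$. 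It therefore suffices to bound $\calR(\uht,\Mht)$, together with the $X^*$-norm of the HJB residual in strong-in-time form, by $\eta+\osc_{\tau}$.

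\textbf{Initial term and KFP residual.} The initial term is immediate: $\norm{\Mht(0)-m_0}_\Omega=\eta_0$. For the KFP residual we fix $w\in X$ and write $w=(w-\Pi_\tau\calI_h w)+\Pi_\tau\calI_h w$, where $\calI_h$ is a Scott--Zhang (or Cl\'ement) quasi-interpolant into $V_h$. Since $\Pi_\tau\calI_h w\in\V$, the discrete equation~\eqref{eq:FEM-scheme-def} allows us to replace $R_2^Y(\uht,\Mht)$ tested on it by $\calS_2(\uht;\mht;\Pi_\tau\calI_h w)$ plus the consistency term $\int(\nabla(\mht-\Mht)\cdot\nu,\nabla\cdot)$-type differences. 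More precisely, we need $\mht$ versus $\Mht$ in the diffusion and drift terms, which are bounded by $\etaRecon{2}\norm{w}_X$ using $\abs{\dpH}\leq L_H$ and Poincar\'e. The stabilization contribution is bounded by $\etaStab{2}\norm{\Pi_\tau\calI_h w}_X\lesssim\etaStab{2}\norm{w}_X$, using $H^1$-stability of $\Pi_\tau$ and $\calI_h$.

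\textbf{Interpolation remainder.} On the remainder $w-\Pi_\tau\calI_h w=(w-\Pi_\tau w)+\Pi_\tau(w-\calI_h w)$ we argue separately for each piece. For the spatial piece $\Pi_\tau(w-\calI_h w)$, all discrete quantities ($\partial_t\Mht$, $\mht$, and the piecewise constant part) are tested against a time-piecewise-constant function. Elementwise integration by parts then produces $r_{K,n,2}$ and $j_{F,n,2}$, and the standard approximation properties of $\calI_h$ give the bound $\etaRes{2}\norm{w}_X$. For the temporal piece $w-\Pi_\tau w$, every term that is constant on each $I_n$ drops out. The term $\partial_t\Mht$ is constant on $I_n$, and so are $\nabla\mht$ and the lumped parts, so only $(I-\Pi_\tau)G$ and $\mht(I-\Pi_\tau)\dpH[\nabla\uht]$ remain. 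These give exactly $\osc_{\tau,2}\norm{w}_X$.

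\textbf{HJB residual and main obstacle.} The HJB residual $R_1^X(\uht,\Mht)$ is treated in the same way, with three changes. First, we integrate by parts in time, which is possible for $\Uht\in Y$ and $v\in Y_0$. Second, the final value produces $(S[\Mht(T)]-\Uht(T),v(T))_\Omega$, which is bounded by $\eta_T\norm{v}_Y$. Third, the spatial argument on $v\in Y_0\subset X$ uses $r_{K,n,1}$ and $j_{F,n,1}$, $\etaStab{1}$, $\etaRecon{1}$, and $\osc_{\tau,1}$; here $F[\mht]$ versus $F[\Mht]$ costs $L_F\norm{\mht-\Mht}_Z\lesssim\etaRecon{2}$ by the embedding $X\hookrightarrow Z$. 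I expect the main obstacle to be this bookkeeping of the mismatches between $\uht$ and $\Uht$ (and between $\mht$ and $\Mht$) inside the nonlinear terms. In particular, $H[\nabla\uht]$ versus $H[\nabla\Uht]$ and $\Mht\dpH[\nabla\uht]$ in $R_2$ must be controlled by the jump estimators with constants independent of the mesh. Collecting all the estimates, Theorem~\ref{theorem:equivalence-result} then yields~\eqref{eq:V+Y-reliability}.
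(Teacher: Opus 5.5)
Your proposal is correct and follows essentially the paper's proof. You apply Theorem~\ref{theorem:equivalence-result} to $(\uht,\Mht)$ and decompose the residuals into the discrete residuals $\mathfrak{R}_i$ plus $\mht$--$\Mht$ mismatch terms. You split test functions via $\Pi_{h,\tau}$ into stabilization, temporal-oscillation and residual-estimator parts, and recover $\norm{\partial_t(u-\Uht)}_{X^*}$ from~\eqref{eq:HJB_time_strong_form}.

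The one correction concerns the obstacle you flag. The $H[\nabla\uht]$ versus $H[\nabla\Uht]$ mismatch never arises, since $R_1^X(\uht,\Mht)$ and the scheme both evaluate $H$ at $\nabla\uht$. Instead, $\etaRecon{1}$ enters only through the term $\int_0^T\dualprod{\partial_t v}{\Uht-\uht}{}\dt$, which appears when you replace $\uht$ by $\Uht$ before integrating by parts in time.
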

The proof of Theorem~\ref{theorem:reliability} is deferred to Section~\ref{sec:reliability-proof} below.

The following two main results show the efficiency of the estimator, i.e.\ the estimator is bounded from above by the error, up to spatial data oscillation terms.
Theorem~\ref{theorem:local-efficiency} below shows the local efficiency of the temporal jump estimator and of the residual estimators.

\begin{theorem}[Local efficiency]\label{theorem:local-efficiency}
    Let $(\uht,\mht) \in \Vone \times \Vtwo$ be the numerical solution of the FEM scheme in~\eqref{eq:FEM-scheme-def}. For any $K \in \T$, $n \in \{ 1,...,N\}$, and polynomial degree $\kappa \geq 0$, the local temporal jump and residual estimators satisfy
    \begin{multline}\label{eq:local-efficiency}
        \sum_{i=1}^2 [\etaRecon{i,K,n}^2 + \etaRes{i,K,n}^2] \\
        \lesssim  \norm{\uht-\Uht}_{L^2(I_n;H^1(\omega_K))}^2 + \norm{\mht-\Mht}_{L^2(I_n;H^1(\omega_K))}^2 \\
        + \norm{\partial_t (u-\Uht)}_{L^2(I_n;H^{-1}(\omega_K))}^2 + \norm{\partial_t (m-\Mht)}_{L^2(I_n;H^{-1}(\omega_K))}^2 \\
     + \norm{u-\Uht}_{L^2(I_n;H^1( \omega_K))}^2  + \norm{m-\Mht}_{L^2(I_n;H^1(\omega_K))}^2  \\
     + \norm{F[m]-F[\mht]}_{L^2(I_n;H^{-1}(\omega_K))}^2+ \sum_{i=1}^2 [\osc_{K,n,\kappa,i}]^2, 
    \end{multline}
    where the hidden constant depends only on $d$, $\kappa$, $\nu$, $L_H$, $L_{H_p}$, $M_\infty$, $\diam \Omega$, and the shape-regularity of $\T$.
\end{theorem}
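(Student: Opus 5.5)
The jump estimators are immediate. By definition, $\etaRecon{K,n,1}=\norm{\nabla(\uht-\Uht)}_{L^2(I_n\times K)}\le \norm{\uht-\Uht}_{L^2(I_n;H^1(\omega_K))}$, and the analogous bound holds for $i=2$. These are exactly the first two terms on the right-hand side of \eqref{eq:local-efficiency}, so the substance lies in bounding the residual estimators $\etaRes{K,n,i}$. For these we follow Verfürth's bubble-function technique, applied on each time interval $I_n$ separately, pointwise in time. We first replace $r_{K',n,i}$ and $j_{F,n,i}$ by their projections $\Pi_{K',\kappa}r_{K',n,i}$ and $\Pi_{F,\kappa}j_{F,n,i}$. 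The differences are precisely the spatial oscillation terms $\osc_{K,n,\kappa,i}$. The projected quantities are polynomials in space for a.e.\ $t$, so the standard inverse estimates for element and face bubbles $b_K$ and $b_F$ apply, with constants depending on $d$, $\kappa$ and shape regularity.

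\emph{Key identity.} For a.e.\ $t\in I_n$ and every $v\in H^1_0(\omega_K)$, extended by zero, integrate by parts elementwise and use the strong forms of the equations. This gives
\begin{equation*}
\sum_{K'\subset\omega_K}(r_{K',n,1},v)_{K'}+\sum_{F}(j_{F,n,1},v)_F=\langle -\partial_t(u-\Uht),v\rangle+(\nu\nabla(u-\uht),\nabla v)+(H[\nabla u]-H[\nabla\uht],v)-\langle F[m]-F[\mht],v\rangle ,
\end{equation*}
where we have used \eqref{eq:HJB_time_strong_form} localized in space and time. Since $v$ is local, this localization is legitimate: test \eqref{eq:HJB_time_strong_form} with $\chi(t)v(x)$ for arbitrary $\chi$. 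Sign conventions on the jumps will need to be checked. We then write $\nabla(u-\uht)=\nabla(u-\Uht)+\nabla(\Uht-\uht)$, and $H$ is Lipschitz by \eqref{eq:H-cts-condition}. Together with Poincaré/Friedrichs on the patch, the right-hand side is therefore bounded by $\norm{v}_{H^1(\omega_K)}$ times the $H^{-1}(\omega_K)$ norm of $\partial_t(u-\Uht)$, the $H^1(\omega_K)$ norms of $u-\Uht$ and $\uht-\Uht$, and the $H^{-1}(\omega_K)$ norm of $F[m]-F[\mht]$.

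For the KFP residual the analogous identity reads
\begin{equation*}
\langle\partial_t(m-\Mht),v\rangle+(\nu\nabla(m-\mht),\nabla v)+(m\dpH[\nabla u]-\mht\dpH[\nabla\uht],\nabla v).
\end{equation*}
We expand the drift difference as
\begin{equation*}
(m-\mht)\dpH[\nabla u]+\mht\bigl(\dpH[\nabla u]-\dpH[\nabla\uht]\bigr).
\end{equation*}
The first part is controlled by $L_H\norm{m-\mht}_{L^2(\omega_K)}$, which is bounded by the $\Mht$-terms via the triangle inequality. The second part uses \eqref{eq:dpH-cts-condition} together with an $L^\infty$ bound on $\mht$.

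\emph{Main obstacle.} The theorem does not assume \ref{H:stabilization_main}, and its constant depends on $M_\infty$, not on $\norm{\mht}_{L^\infty}$. One remedy is to rewrite $\mht(\dpH[\nabla u]-\dpH[\nabla\uht])=m(\cdots)+(\mht-m)(\cdots)$. The first term is bounded using $M_\infty L_{H_p}\norm{\nabla(u-\uht)}$. The second is bounded using $|\dpH|\le L_H$, which gives $2L_H\norm{m-\mht}$. This avoids any bound on $\mht$, and it is the step I would verify most carefully.

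\emph{Bubble step and conclusion.} Take $v=h_K^2 b_K\Pi_{K,\kappa}r_{K,n,i}$ on each element. Then take $v=h_F\,b_F\,\mathcal{E}(\Pi_{F,\kappa}j_{F,n,i})$ with the standard polynomial extension $\mathcal{E}$ supported in $\omega_F$, using the already-bounded volume residuals to absorb the element contributions. Squaring and integrating over $I_n$ gives \eqref{eq:local-efficiency}. The face residuals of $\etaRes{K,n,i}$ involve $\omega_F\subset\omega_K$, and the oscillation terms are summed over $\tTK$, which matches the definition of $\osc_{K,n,\kappa,i}$.
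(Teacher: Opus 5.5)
Your proposal is correct and follows essentially the same route as the paper. The temporal jump estimators are bounded directly by definition, and the residual estimators are handled by spatial bubble functions with $L^2$-projections, which produce the oscillation terms. The error-dependent part is then bounded by subtracting the localized time-strong weak form and using the Lipschitz continuity of $H$, $\dpH$, $F$ together with the bound $M_\infty$ on $m$. Your two-step rearrangement of the drift term reduces to the direct splitting $m(\dpH[\nabla u]-\dpH[\nabla\uht])+(m-\mht)\dpH[\nabla\uht]$, which is why the constant involves $M_\infty$ rather than any bound on $\mht$.
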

Theorem~\ref{theorem:local-efficiency} shows that the temporal jump and residual estimators are locally efficient over each time interval-element block $I_n \times K$.
The proof of Theorem~\ref{theorem:local-efficiency} is given in Section~\ref{sec:efficiency-proof}.
\begin{remark}[Nonlocality of coupling terms]
Note that since the coupling operator $F$ is allowed to be nonlocal, we do not attempt to bound further the terms $\norm{F[m]-F[\mht]}_{L^2(I_n;H^{-1}(\omega_K))}^2$ in~\eqref{eq:local-efficiency}. 
Nevertheless, in the global efficiency result below, we show that the sum of these terms over time intervals and mesh elements will be bounded from above by the error $m-\mht$, which is why we view this term as locally efficient.
\end{remark}
\begin{remark}[Initial and final time estimators]\label{rem:efficiency_initial_final}
The initial and final time estimators $\eta_0$ and $\eta_T$ defined in~\eqref{eq:eta0-T-defs} above are also locally efficient. Indeed, defining the local contributions $\eta_{0,K}\coloneqq \norm{m_0-\Pi_0 m_0}_K $ and $\eta_{T,K}\coloneqq \norm{S[\mht(T)]-\Pi_T S[\mht(T)]}_K$ for each $K\in\T$, we have the local efficiency bounds
\begin{subequations}\label{eq:initial_final_estimator_efficiency}
\begin{align}
\eta_{0,K} &=\norm{m(0)-\mht(0)}_K,
\\ \eta_{T,K} &\leq \norm{S[\mht(T)]-S[m(T)]}_K+\norm{u(T)-\uht(T)}_K,
\end{align}
\end{subequations}
for each $K\in\T$. Indeed, we obtain~\eqref{eq:initial_final_estimator_efficiency} from the triangle inequality and the identities $m(0)=m_0$, $u(T)=S[m(T)]$ and $\uht(T)=\Pi_T S[\mht(T)]$.
Since the operator $S$ is allowed to be nonlocal, we do not bound further the local terms $\norm{S[\mht(T)]-S[m(T)]}_K$.
\end{remark}

Note also that it is not possible to show local efficiency or computability of the stabilization estimator for the general abstract class of stabilizations considered above, since it is not locally computable.
However, we can show the global efficiency of all components of the estimator, as seen in Theorem~\ref{theorem:global-efficiency} below.

\begin{theorem}[Global efficiency] \label{theorem:global-efficiency}
    Let $(\uht,\mht) \in \Vone \times \Vtwo$ be the numerical solution of the FEM scheme in~\eqref{eq:FEM-scheme-def}. For any polynomial degree $\kappa \geq 0$, the \emph{a posteriori} error estimator satisfies
    \begin{equation*}
        \eta(\uht,\mht) \lesssim\norm{u-\uht}_{\Vone+Y} + \norm{m-\mht}_{\Vtwo + Y} +  \sum_{i=1}^2 \osc_{\kappa,i},
    \end{equation*}
    where the hidden constant depends only on $d$, $\kappa$, $\nu$, $L_H$, $L_{H_p}$, $L_F$, $L_S$, $M_\infty$, $\diam \Omega$, $T$, and the shape-regularity of $\T$.
\end{theorem}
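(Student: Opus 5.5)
The plan is to bound each component of $\eta(\uht,\mht)$ in turn by the error $E\coloneqq\norm{u-\uht}_{\Vone+Y}+\norm{m-\mht}_{\Vtwo+Y}$ plus spatial oscillation. The residual and temporal jump estimators are handled by Theorem~\ref{theorem:local-efficiency}: summing \eqref{eq:local-efficiency} over $K\in\T$ and $n$, the overlap of the patches $\omega_K$ is bounded by shape regularity. The localized $H^1$ and $H^{-1}$ norms then sum to at most a constant times the corresponding global norms; for the $H^{-1}(\omega_K)$ terms we use $\norm{\cdot}_{H^{-1}(\omega_K)}\le\norm{\cdot}_{H^{-1}(\Omega)}$ only after summing via a finite-overlap localization argument. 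Then $\norm{\uht-\Uht}_X\le \norm{u-\uht}_{\Vone+Y}$, because $\calI_1 u=u$ gives $\uht-\Uht=(I-\calI_1)(\uht-u)$; the same holds for $m$. Likewise $\norm{u-\Uht}_Y=\norm{\calI_1(u-\uht)}_Y$, which controls both $\norm{u-\Uht}_X$ and $\norm{\partial_t(u-\Uht)}_{L^2(H^{-1})}$. The coupling term satisfies $\sum\norm{F[m]-F[\mht]}^2\lesssim \norm{F[m]-F[\mht]}_{X^*}^2\le L_F^2\norm{m-\mht}_Z^2$. To control the $Z$-norm, we write $m-\mht=(m-\Mht)+(\Mht-\mht)$ and use $X,Y\hookrightarrow Z$, giving a bound by $E$.

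For $\eta_0$ and $\eta_T$ we use Remark~\ref{rem:efficiency_initial_final}. First, $\eta_0=\norm{m(0)-\Mht(0)}_\Omega\le\norm{m-\Mht}_Y$ by \eqref{eq:Y-embedding}, since $\Mht(0)=\mht(0)$. Second, $\eta_T\le L_S\norm{m(T)-\mht(T)}_\Omega+\norm{u(T)-\uht(T)}_\Omega$. Here $\mht(T)=\Mht(T)$ by left-continuity, and $\uht(T)=\Uht(T)$ by construction of $\calI_1$, so both terms are bounded via \eqref{eq:Y-embedding} by $\norm{\calI_i(\cdot)}_Y\le E$.

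The main step is the stabilization estimator. Fix $\vht\in\V$ and use the scheme \eqref{eq:FEM-scheme-def} to write $\calS_1(\uht;\mht;\vht)$ as $\int_0^T \dualprod{F[\mht]}{\vht}{}\dt$ minus the discrete HJB form. Then subtract \eqref{eq:HJB_time_strong_form} tested with $v=\vht\in X$. This gives
\[
\calS_1(\uht;\mht;\vht)=\int_0^T\Big[\dualprod{F[\mht]-F[m]}{\vht}{}+\dualprod{\partial_t(\Uht-u)}{\vht}{}+(\nu\nabla(u-\uht),\nabla\vht)_\Omega+(H[\nabla u]-H[\nabla\uht],\vht)_\Omega\Big]\dt .
\]
Each term is bounded by $E\norm{\vht}_X$ using \eqref{eq:H-cts-condition}, Poincar\'e, \eqref{eq:F-cts-condition}, and the bounds above. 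The KFP stabilization is treated identically using \eqref{eq:weak-formulation-KFP}. The drift difference $m\dpH[\nabla u]-\mht\dpH[\nabla\uht]$ is split as $(m-\mht)\dpH[\nabla u]+\mht(\dpH[\nabla u]-\dpH[\nabla\uht])$. The first part is bounded by $L_H$ together with Poincar\'e applied to $m-\mht$.

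The second part is the expected obstacle: it needs $\norm{\mht}_{L^\infty}$, which is not directly available, and rewriting $\mht=m-(m-\mht)$ produces a product of errors. My first attempt would be to bound $\abs{\dpH[\nabla u]-\dpH[\nabla\uht]}\le 2L_H$ on the error part and use $M_\infty$ on $m$. This gives $\norm{m(\dpH[\nabla u]-\dpH[\nabla\uht])}\lesssim L_{H_p}M_\infty\norm{u-\uht}_X$ plus $2L_H\norm{m-\mht}_{L^2(Q_T)}$, which is linear in $E$, as needed. Taking the supremum over $\vht$ then concludes the proof.
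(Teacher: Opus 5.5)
Your proposal is correct and follows essentially the paper's proof. You sum the local efficiency bounds with the finite-overlap $H^{-1}$ localization, pass to the extended norms via $\uht-\Uht=(\mathrm{I}-\calI_1)(\uht-u)$ and its analogue for $\mht$, use Remark~\ref{rem:efficiency_initial_final} for $\eta_0,\eta_T$, and bound $\etaStab{1},\etaStab{2}$ by subtracting the continuous equations from the scheme. Your drift argument is complete, and slightly more careful than the paper's displayed identity for $\mathcal{S}_2$, which drops the term $m(\dpH[\nabla u]-\dpH[\nabla\uht])$: your detour through $\mht=m-(m-\mht)$ just recovers the split $(m-\mht)\dpH[\nabla\uht]+m(\dpH[\nabla u]-\dpH[\nabla\uht])$, whose second piece is bounded by $L_{H_p}M_\infty\norm{u-\uht}_X$, so no product of errors arises.
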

The consequence of Theorem~\ref{theorem:global-efficiency} is that the total error estimator is bounded from above by the approximation error, proving that the estimator does not overestimate the error, up to a multiplicative constant.
Theorems~\ref{theorem:reliability} and~\ref{theorem:global-efficiency} together establish the equivalence between the error and the estimator, up to data oscillation.

\subsection{Proof of reliability}\label{sec:reliability-proof}
The starting point for the analysis is the equivalence result of Theorem~\ref{theorem:equivalence-result}, which implies that $ \norm{u-\uht}_X + \norm{m-\Mht}_Y \lesssim \calR(\uht,\Mht)$, where it is recalled that $\calR(\uht,\Mht)$ defined by~\eqref{eq:calR-def}.
In order to connect the residuals at the continuous level with the numerical scheme, it is helpful to write the definition of the FEM~\eqref{eq:FEM-scheme-def} in the equivalent form
\begin{equation}\label{eq:FEM-scheme-def-alt}
\begin{aligned}
\mathfrak{R}_i(\vht) = \mathcal{S}_i(\uht,\mht;\vht) \quad\forall \vht \in \V, \; \forall i\in \{1,2\},
\end{aligned}
\end{equation}
where the discrete residual functionals $\mathfrak{R}_i\in X^*$, $i\in \{1,2\}$, are defined by
\begin{align}
\mathfrak{R}_1(v) &\coloneqq 
\int_0^T \dualprod{F[\mht]}{v}{} \dt  \label{eq:discrete_R1_def}
\\
&- \int_0^T \big[-(\partial_t   \Uht, v)_\Omega + (\nu \nabla \uht,\nabla v)_\Omega + (H[\nabla \uht],v)_\Omega \big] \mathrm{d}t, \notag 
\\
 \mathfrak{R}_2(v)&\coloneqq \int_0^T \dualprod{G}{v}{} \dt \label{eq:discrete_R2_def}
 \\ 
 &-\int_0^T \big[ (\partial_t  \Mht,v)_{\Omega}  + \LTwoInner{\nu \nabla \mht}{\nabla v}{\Omega} + \LTwoInner{\mht \dpH[\nabla \uht]}{\nabla v}{\Omega} \big]\dt, \notag
\end{align}
for all $v\times X$.
By introducing the above discrete residual functionals, we can now give a suitable decomposition of the residuals $R_1^X(\uht,\Mht)$ and $R_1^Y(\uht,\Mht)$ that appear in $\calR(\uht,\Mht)$, as shown in the following Lemma.
\begin{lemma}[Decomposition of the residuals]\label{lem:residual_decomposition}
For each $v\in Y_0$, we have
\begin{multline}\label{eq:residual_decomposition_1}
\dualprod{R_1^X(\uht,\Mht)}{v}{Y_0^*\times Y_0} = \mathfrak{R}_1(v) 
 + \int_0^T \dualprod{\partial_t v}{\Uht - \uht}{} \mathrm{d}t 
 \\ + 
 \left((\mathrm{I}-\Pi_T)S[\mht(T)],v(T)\right)_\Omega + \int_0^T \dualprod{F[\Mht]-F[\mht]}{v}{}\mathrm{d}t,
\end{multline}
where $\mathrm{I}$ denotes the identity operator on $L^2(\Omega)$.
For each $w\in X$, we have
\begin{multline}\label{eq:residual_decomposition_2}
\dualprod{R_1^Y(\uht,\Mht)}{w}{X*\times X} = \mathfrak{R}_2(w) 
\\ + \int_0^T \left[ \nu(\nabla (\mht-\Mht),\nabla w)_\Omega + \left( (\mht-\Mht)\dpH[\nabla \uht],\nabla w\right)_\Omega  \right]\mathrm{d}t.
\end{multline}
\end{lemma}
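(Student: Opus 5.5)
We prove both identities by direct algebraic comparison of the definitions \eqref{eq:R_i-defs} with \eqref{eq:discrete_R1_def}--\eqref{eq:discrete_R2_def}. No estimates are needed. The only nontrivial ingredient is a time integration by parts for the pair $(v,\Uht)$, where both functions lie in $Y$.

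For \eqref{eq:residual_decomposition_1}, fix $v\in Y_0$. Write $\dualprod{R_1^X(\uht,\Mht)}{v}{}$ using \eqref{eq:R_1^X-def} with $\uBar=\uht$ and $\mBar=\Mht$. The time-derivative term is $-\int_0^T\dualprod{\partial_t v}{\uht}{}\dt$. Split it as
\[
-\int_0^T\dualprod{\partial_t v}{\Uht}{}\dt+\int_0^T\dualprod{\partial_t v}{\Uht-\uht}{}\dt .
\]
Since $\Uht=\calI_1\uht\in Y$ and $v\in Y_0$, the integration-by-parts formula in $Y$ gives
\[
-\int_0^T\dualprod{\partial_t v}{\Uht}{}\dt=\int_0^T(\partial_t\Uht,v)_\Omega\dt-(\Uht(T),v(T))_\Omega ,
\]
because $v(0)=0$. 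The reconstruction satisfies $\Uht(T)=\uht(T)=\Pi_T S[\mht(T)]$; this follows from \eqref{eq:Reconstruction-defs} and the right-continuity of $\uht$, since the formula for $\calI_1$ on $[t_{N-1},t_N)$ extends by continuity to $t_N$, yielding $\uht(T^-)-\timejump{\uht}_N=\uht(T)$. We also have $\mBar(T)=\Mht(T)=\mht(T)$, since $\calI_2$ leaves left-continuous values at $t_n$ unchanged. Hence $(S[\Mht(T)],v(T))_\Omega-(\Uht(T),v(T))_\Omega=((\mathrm{I}-\Pi_T)S[\mht(T)],v(T))_\Omega$. Finally, write $F[\Mht]=F[\mht]+(F[\Mht]-F[\mht])$. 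The remaining terms, namely $\int_0^T\dualprod{F[\mht]}{v}{}\dt$ together with $(\partial_t\Uht,v)_\Omega$, the diffusion term and the Hamiltonian term, combine exactly into $\mathfrak{R}_1(v)$, which gives \eqref{eq:residual_decomposition_1}.

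For \eqref{eq:residual_decomposition_2}, fix $w\in X$. The residual on the left-hand side is $R_2^Y(\uht,\Mht)$; the $R_1^Y$ in the statement is evidently a typo. In \eqref{eq:R_2^Y-def}, the time derivative already falls on $\Mht\in Y$, so it matches the term in $\mathfrak{R}_2$ directly. In the spatial terms we write $\Mht=\mht-(\mht-\Mht)$ inside both $\nu\nabla\Mht$ and $\Mht\dpH[\nabla\uht]$. The $\mht$ parts reproduce $\mathfrak{R}_2(w)$, and the remainder is $+\int_0^T[\nu(\nabla(\mht-\Mht),\nabla w)_\Omega+((\mht-\Mht)\dpH[\nabla\uht],\nabla w)_\Omega]\dt$, as claimed.

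The only step requiring care is the endpoint bookkeeping in the first identity. We must justify the integration by parts for $\Uht,v\in Y$, which is standard via the embedding $Y\hookrightarrow C([0,T];L^2(\Omega))$. We must also verify the trace values $\Uht(T)=\uht(T)$ and $\Mht(T)=\mht(T)$ from the definitions of $\calI_i$ and the one-sided continuity conventions of $\Vone$ and $\Vtwo$. Everything else is rearrangement.
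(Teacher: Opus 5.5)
Your proposal is correct and takes the same approach as the paper's proof. You add and subtract $\int_0^T \dualprod{\partial_t v}{\Uht}{}\dt$ and $\int_0^T\dualprod{F[\mht]}{v}{}\dt$, integrate by parts in time using $v(0)=0$ and $\Uht(T)=\uht(T)=\Pi_T S[\mht(T)]$, and obtain the second identity by the analogous rearrangement. Your explicit checks fill in details the paper leaves implicit: $\Mht(T)=\mht(T)$ is what allows $S[\Mht(T)]$ to be replaced by $S[\mht(T)]$, and the left-hand side of the second identity should indeed read $R_2^Y$.
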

\begin{proof}
To show~\eqref{eq:residual_decomposition_1}, we start from the definition of~$R_1^X(\uht,\Mht)$ in~\eqref{eq:R_1^X-def}, and we add and subtract the terms $\int_0^T \dualprod{\partial_t v}{\Uht}{}\mathrm{d}t$ and $\int_0^T \dualprod{F[\mht]}{v}{}\mathrm{d}t$.
We then obtain~\eqref{eq:residual_decomposition_1} by using the identity $\int_0^T \dualprod{\partial_t v}{\Uht}{}\mathrm{d}t = (\Pi_T S[\mht(T)],v(T))_\Omega-\int_0^T(\partial_t \Uht,v)_\Omega\mathrm{d}t $, which follows from the final time condition $\uht(T)=\Uht(T)=\Pi_T S[\mht(T)]$ and from $v(0)=0$ as $v\in Y_0$.
The proof of \eqref{eq:residual_decomposition_2} follows from a similar calculation, the details are left to the reader.
\end{proof}

The decompositions of the residuals of Lemma~\ref{lem:residual_decomposition} allows us to bound the dual norms $R_1^X(\uht,\Mht)$ and $R_2^Y(\uht,\Mht)$.
First, we show how the discrete residual functionals~$\mathfrak{R}_i$ that appear as the first terms in the decompositions lead to the residual and stabilization estimators, plus the temporal data oscillation.
\begin{lemma}\label{lem:residual_estimators}
For each $v\in X$, we have
\begin{equation}\label{eq:R-frac-i-dual-bound}
\abs{\mathfrak{R}_i(v)}\lesssim \left( \etaRes{i}+ \etaStab{i} + \osc_{\tau,i} \right)\norm{v}_X \quad i \in \{1,2\},
\end{equation}
where the hidden constants depend only on $d$ and the shape-regularity of $\T$.
\end{lemma}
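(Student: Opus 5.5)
The plan is the standard Galerkin-orthogonality argument for residual estimators, adapted to the space-time setting. Fix $v\in X$ and let $\Pi_h$ denote a Scott--Zhang (or Clément-type) quasi-interpolant onto $V_h$, applied pointwise in time. Set $\vht\coloneqq \Pi_\tau \Pi_h v\in\V$; this is well defined because $\Pi_\tau$ and $\Pi_h$ commute, and it satisfies $\norm{\vht}_X\lesssim \norm{v}_X$ by the $H^1$-stability of $\Pi_h$ and the $L^2$-stability of $\Pi_\tau$. Then \eqref{eq:FEM-scheme-def-alt} gives
\[
\mathfrak{R}_i(v) = \mathfrak{R}_i(v-\vht) + \calS_i(\uht;\mht;\vht),
\]
and the second term is bounded by $\etaStab{i}\norm{\vht}_X\lesssim \etaStab{i}\norm{v}_X$ by the definition \eqref{eq:etaStab-defs}. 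It remains to bound $\mathfrak{R}_i(v-\vht)$, which we split as $\mathfrak{R}_i(v-\Pi_\tau v) + \mathfrak{R}_i(\Pi_\tau(v-\Pi_h v))$.

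\textbf{Temporal part.} Consider $\mathfrak{R}_i(v-\Pi_\tau v)$. On each $I_n$, every discrete quantity in $\mathfrak{R}_i$ is constant in time: $\nabla\uht$, $\nabla\mht$, $\mht$, and also $\partial_t\Uht$ and $\partial_t\Mht$, since the reconstructions are affine on $I_n$. Because $(\mathrm{I}-\Pi_\tau)v$ has zero mean on each $I_n$, all these terms vanish. What survives is $\int_0^T (F[\mht]-H[\nabla\uht],(\mathrm{I}-\Pi_\tau)v)_\Omega\,\dt$ for $i=1$, and for $i=2$ the terms involving $G$ and $\mht\dpH[\nabla\uht]$. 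By self-adjointness of $\Pi_\tau$ we may move $(\mathrm{I}-\Pi_\tau)$ onto the data terms, obtaining for instance $\int_0^T\langle (\mathrm{I}-\Pi_\tau)(F[\mht]-H[\nabla\uht]), v\rangle\,\dt$. For the drift term we use that $\mht$ is piecewise constant in time, so $(\mathrm{I}-\Pi_\tau)(\mht\dpH[\nabla\uht]) = \mht(\mathrm{I}-\Pi_\tau)\dpH[\nabla\uht]$. Cauchy--Schwarz in time then gives the bound $\osc_{\tau,i}\norm{v}_X$.

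\textbf{Spatial part.} For $w\coloneqq\Pi_\tau(v-\Pi_h v)$, which is piecewise constant in time, the time derivative terms $\partial_t\Uht$ and $\partial_t\Mht$ are constant in time on each $I_n$. On each $I_n$ we integrate by parts elementwise in space. This produces the volume residuals $r_{K,n,i}$ tested against $w$ on each $K$, and the face jumps $j_{F,n,i}$ tested against $w$ on each interior face; boundary faces drop out because $w\in H^1_0(\Omega)$. For $i=2$, $\mht$ is continuous, so the jump of the flux $\mht\dpH[\nabla\uht]\cdot n_F$ reduces to $\mht\jump{\dpH[\nabla\uht]\cdot n_F}$, matching \eqref{eq:spatial-jump-residual-kfp-def}. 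Since $\mht$ is only piecewise affine in space and $\dpH[\nabla \uht]$ need not be smooth, the elementwise divergence $\mathrm{div}(\mht\dpH[\nabla\uht])$ must be read in the sense implicit in \eqref{eq:volume-residual-defs}. This is the point I expect to need the most care: one must either assume enough regularity of $H_p$ in $(t,x)$ for this divergence to lie in $L^2(K)$ elementwise, or interpret it as in the paper's definition. I would handle it by noting that $\nabla\uht$ is constant on each $K$, so only the $x$-dependence of $H_p$ enters.

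\textbf{Conclusion.} The quasi-interpolation estimates $\norm{v-\Pi_h v}_K\lesssim h_K\norm{\nabla v}_{\omega_K}$ and $\norm{v-\Pi_h v}_F\lesssim h_F^{1/2}\norm{\nabla v}_{\omega_F}$, with constants depending on $d$ and the shape regularity, hold pointwise in time. They are preserved under $\Pi_\tau$ by Jensen's inequality. Cauchy--Schwarz over $n$, $K$ and $F$, together with the finite overlap of the patches, then yields $\etaRes{i}\norm{v}_X$. Summing the three contributions gives \eqref{eq:R-frac-i-dual-bound}.
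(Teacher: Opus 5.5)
Your proposal is correct and follows the paper's proof: the same three-way split $\mathfrak{R}_i(v)=\mathfrak{R}_i(\Pi_h\Pi_\tau v)+\mathfrak{R}_i(v-\Pi_\tau v)+\mathfrak{R}_i((\mathrm{I}-\Pi_h)\Pi_\tau v)$, handled respectively by the definition of the stabilization estimator, by the orthogonality of $\Pi_\tau$ (giving $\osc_{\tau,i}$), and by elementwise integration by parts with Scott--Zhang approximation bounds (giving $\etaRes{i}$). The regularity issue you flag for $\mathrm{div}(\mht\dpH[\nabla\uht])$ is also left implicit in the paper, which simply takes $r_{K,n,2}\in L^2(K)$; note that observing that $\nabla\uht$ is constant on each $K$ does not settle it, since one still needs some $x$-differentiability of $H_p$.
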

\begin{proof}
Let $v\in X$ be arbitrary.
Recall that $\Pi_\tau$ denotes the global time-averaging projection, and let $\Pi_h\colon H^1_0(\Omega)\to V_h$ denote the Scott--Zhang quasi-interpolant operator~\cite{ScottZhang1990}.
Let $\Pi_{h,\tau}=\Pi_h \Pi_\tau :X \to \V$ be the composition of $\Pi_h$ and $\Pi_\tau$.
We start by decomposing 
\begin{equation}\label{eq:residual_individual_bounds_1}
    \mathfrak{R}_{i}(v) = \mathfrak{R}_{i}(\Pi_{h,\tau} v) +
    \mathfrak{R}_{i}(v-\Pi_\tau v)+ \mathfrak{R}_{i}(\Pi_\tau v-\Pi_{h,\tau} v) ,
\end{equation}
It is then seen from~\eqref{eq:FEM-scheme-def-alt}, which gives  $\mathfrak{R}_{i}(\Pi_{h,\tau} v)=\mathcal{S}_i(\uht,\mht;\Pi_{h,\tau} v)$, and from the $X$-norm stability of $\Pi_{h,\tau}$, that $\lvert\mathfrak{R}_{i}(\Pi_{h,\tau} v)\rvert \lesssim \etaStab{i}\norm{v}_X$.
The orthogonality of the projection~$\Pi_\tau$ implies that the second term on the right-hand side of~\eqref{eq:residual_individual_bounds_1} is bounded, up to a constant, by $\osc_{\tau,i} \norm{v}_X$, where we recall that the temporal oscillation terms~$\osc_{\tau,i}$ are defined in~\eqref{eq:temporal_oscillations}.
The bound for the final term on the right-hand side of~\eqref{eq:residual_individual_bounds_1} follows the usual approach of residual estimators, see~\cite[Theorem~3.58, p.~135]{verfurth2013posteriori}: we use integration by parts in space, together with the definitions of the volume residuals and face jumps in~\eqref{eq:volume-residual-defs} and~\eqref{eq:spatial-jump-residual-defs}, to find that 
\begin{equation}\label{eq:residual_IBP}
\mathfrak{R}_{i}(w) \\= \sum_{n=1}^N \int_{I_n} \left[  \sum_{K \in \T} \LTwoInner{r_{K,n,i}}{w}{K} -  \sum_{F \in \calF^I} \LTwoInner{j_{F,n,i}}{w}{F} \right] \dt \quad\forall w\in X.
\end{equation}
Then, using the identity~\eqref{eq:residual_IBP} for  $w=(\Pi_\tau-\Pi_{h,\tau})v = (\mathrm{I}-\Pi_h)\Pi_\tau v $, along with the approximation properties of $\Pi_h$, see~\cite{ScottZhang1990}, we deduce that 
$|\mathfrak{R}_{i}(\Pi_\tau v-\Pi_{h,\tau} v)|\lesssim \etaRes{i} \norm{v}_X$ for each $i\in\{1,2\}$.
This shows~\eqref{eq:R-frac-i-dual-bound}.
\end{proof}

\begin{lemma}[Bounds on residuals]\label{cor:residual_individual_bounds}
\begin{align}
    \norm{R_1^X(\uht,\Mht)}_{Y_0^*} &\lesssim \etaRes{1} +  \etaStab{1} + \etaRecon{1} + \eta_T + \etaRecon{2} + \osc_{\tau,1}, \label{eq:cor:residual_individual_bounds_1}
\\
    \norm{R_2^Y(\uht,\Mht)}_{X^*} &\lesssim  \etaRes{2} + \etaStab{2} + \etaRecon{2} + \osc_{\tau,2}. \label{eq:cor:residual_individual_bounds_2}
\end{align}
where the hidden constants depend only on $d$, $\nu$, $L_H$, $L_{H_p}$, $L_F$, $L_S$, $M_\infty$, $T$, $\diam \Omega$, and the shape-regularity of $\T$.
\end{lemma}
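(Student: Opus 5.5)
We start from the decompositions of Lemma~\ref{lem:residual_decomposition} and bound each term separately in the appropriate dual norm.

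\textbf{HJB residual.} For \eqref{eq:cor:residual_individual_bounds_1}, let $v\in Y_0$ be arbitrary. Since $\norm{v}_X\le\norm{v}_Y$, Lemma~\ref{lem:residual_estimators} gives
\[
|\mathfrak{R}_1(v)|\lesssim(\etaRes{1}+\etaStab{1}+\osc_{\tau,1})\norm{v}_Y .
\]
For the temporal conformity term, we use
\[
\Bigl|\int_0^T\dualprod{\partial_t v}{\Uht-\uht}{}\dt\Bigr| \le \norm{\partial_t v}_{L^2(I;H^{-1}(\Omega))}\,\norm{\uht-\Uht}_X \le \etaRecon{1}\norm{v}_Y,
\]
which holds because $\norm{\uht-\Uht}_X=\etaRecon{1}$ by definition. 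The final-time term is bounded by Cauchy--Schwarz and the embedding \eqref{eq:Y-embedding}, giving $\eta_T\norm{v(T)}_\Omega\le\eta_T\norm{v}_Y$.

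The coupling term is the only place where $\etaRecon{2}$ enters. By the Lipschitz continuity \eqref{eq:F-cts-condition} of $F$ and the continuous embedding $X\hookrightarrow Z$, we get
\[
\Bigl|\int_0^T\dualprod{F[\Mht]-F[\mht]}{v}{}\dt\Bigr| \le L_F\norm{\Mht-\mht}_Z\norm{v}_X \lesssim \norm{\Mht-\mht}_X\norm{v}_Y = \etaRecon{2}\norm{v}_Y .
\]
This step requires $\Mht-\mht\in X$, which holds because both functions lie in $X$. Taking the supremum over $v$ yields \eqref{eq:cor:residual_individual_bounds_1}.

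\textbf{KFP residual.} For \eqref{eq:cor:residual_individual_bounds_2}, let $w\in X$ and use the second decomposition \eqref{eq:residual_decomposition_2}; we read its left-hand side as $R_2^Y(\uht,\Mht)$, the residual actually intended there. Lemma~\ref{lem:residual_estimators} bounds $\mathfrak{R}_2(w)$ by $(\etaRes{2}+\etaStab{2}+\osc_{\tau,2})\norm{w}_X$. The diffusion term is at most $\nu\,\etaRecon{2}\norm{w}_X$ directly.

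The drift term needs slightly more care, since it involves $\mht-\Mht$ itself rather than its gradient. We use \eqref{eq:dpH-bounded-condition} to get $|\dpH[\nabla\uht]|\le L_H$, and then the Poincar\'e inequality on $\Omega$ at each time $t$:
\[
\norm{(\mht-\Mht)\dpH[\nabla\uht]}_{L^2(Q_T;\R^d)} \le L_H\norm{\mht-\Mht}_{L^2(Q_T)} \lesssim L_H\diam\Omega\,\etaRecon{2}.
\]
Cauchy--Schwarz then bounds the drift term by a multiple of $\etaRecon{2}\norm{w}_X$. Taking the supremum over $w$ gives \eqref{eq:cor:residual_individual_bounds_2}.

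\textbf{Main difficulty.} No real obstacle is expected; everything is already encoded in the decomposition lemma. The point that needs care is the coupling term $F[\Mht]-F[\mht]$: the embedding assumption $X\hookrightarrow Z$ must be invoked, rather than $Y\hookrightarrow Z$, because $\mht\notin Y$. It also explains why $\etaRecon{2}$ appears in the bound for the HJB residual.
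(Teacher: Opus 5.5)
Your proof is correct and follows essentially the same route as the paper. You bound $\mathfrak{R}_i$ via Lemma~\ref{lem:residual_estimators} and then estimate the remaining terms of the decomposition in Lemma~\ref{lem:residual_decomposition} by $\etaRecon{1}$, $\eta_T$ and $\etaRecon{2}$, using Cauchy--Schwarz, the definition of $\norm{\cdot}_Y$, the Lipschitz continuity of $F$ with $X\hookrightarrow Z$, and $\abs{\dpH}\le L_H$ together with Poincar\'e for the drift term; you also correctly note that the left-hand side of \eqref{eq:residual_decomposition_2} should read $R_2^Y(\uht,\Mht)$, a detail the paper leaves implicit.
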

\begin{proof}
Lemma~\ref{lem:residual_estimators} provides a bound for the first term on the right-hand sides of~\eqref{eq:residual_decomposition_1} and~\eqref{eq:residual_decomposition_2}, so we turn our attention to the subsequent terms. 
Note in passing that $\norm{v}_X\leq \norm{v}_Y$ for all $v\in Y$.
It is clear that $\lvert \int_0^T \dualprod{\partial_t v }{\Uht-\uht}{}\mathrm{d}t\rvert \lesssim \norm{\Uht-\uht}_X \norm{v}_Y $ for all $v\in Y_0$.
Using the definition of $\norm{\cdot}_Y$, it is also clear that $\lvert\left((\mathrm{I}-\Pi_T)S[\mht(T)],v(T)\right)_\Omega\rvert \leq \eta_T \norm{v}_Y$.
Finally, the Lipschitz continuity of $F$ implies that $\lvert\int_0^T \dualprod{F[\Mht]-F[\mht]}{v}{}\mathrm{d}t\rvert \lesssim \etaRecon{2} \norm{v}_Y$.
This shows~\eqref{eq:cor:residual_individual_bounds_1}.
The proof of~\eqref{eq:cor:residual_individual_bounds_2} is shown in a similar manner, in particular by noting that the terms on the second line of~\eqref{eq:residual_decomposition_2} are bounded by $\etaRecon{2}\norm{w}_X$.
\end{proof}

We now complete the proof of Theorem~\ref{theorem:reliability}.
\begin{proof}[Proof of Theorem~\ref{theorem:reliability}]

By Hypothesis~\ref{H:stabilization_main} we have $(\uht,\Mht)\in X\times Y$ with $\Mht \geq 0$ in $Q_T$, so we apply Theorem~\ref{theorem:equivalence-result} and Lemma~\ref{cor:residual_individual_bounds} to deduce that
\begin{equation}\label{eq:XY-reliability}
        \norm{u-\uht}_X + \norm{m-\Mht}_Y \lesssim \mathcal{R}(\uht,\Mht) \lesssim  \eta(\uht,\mht)+\sum_{i=1}^2 \osc_{\tau,i} , 
\end{equation}
where the hidden constant depends only on $d$, $\nu$, $L_H$, $L_{H_p}$, $L_F$, $L_S$, $M_\infty$, $T$, $\diam \Omega$, and the shape-regularity of $\T$.
It is clear that $ \norm{m-\mht}_{\Vtwo + Y} = \norm{m-\Mht}_Y+\etaRecon{2}$ is also bounded by a constant times the right-hand side of \eqref{eq:XY-reliability}. 
Furthermore, the triangle inequality shows that $\norm{u-\Uht}_X \leq \norm{u-\uht}_X+\etaRecon{1}$, 
 and the identities $u(T)=S[m(T)]$ and $\Uht(T)=\Pi_T S[\Mht(T)]$ imply that $\norm{u(T)-\Uht(T)}_\Omega \lesssim \norm{m-\Mht}_Y+\eta_T$. 
 Therefore, we deduce that $\norm{u-\Uht}_X+\norm{(u-\Uht)(T)}_\Omega $ is also bounded by a constant times the right-hand side of~\eqref{eq:XY-reliability}.
 It remains only to bound $\norm{\partial_t(u-\Uht)}_{X^*}$.
 It follows from~\eqref{eq:HJB_time_strong_form} and the definition of $\mathfrak{R}_1$ in~\eqref{eq:discrete_R1_def} that
 \begin{multline}\label{eq:timederiv_u_Uht}
 \int_0^T \dualprod{\partial_t(\Uht - u )}{v}{}\mathrm{d}t = \mathfrak{R}_{1}(v)+\int_0^T \dualprod{F[m]-F[\mht]}{v}{} \mathrm{d}t 
 \\ + \int_0^T(\nu\nabla (\uht-u),\nabla v)_\Omega + (H[\nabla \uht]-H[\nabla u],v)_\Omega \mathrm{d}t \quad \forall v\in X. 
 \end{multline}
Therefore, the bound of Lemma~\ref{lem:residual_estimators} and~\eqref{eq:timederiv_u_Uht} above, along with the Lipschitz continuity of~$F$ and~$H$, imply that
\begin{equation}
    \norm{\partial_t(u-\Uht)}_{X^*}\lesssim \etaRes{1}+\etaStab{1}+\osc_{\tau,1}+\norm{m-\mht}_X + \norm{u-\uht}_X.
\end{equation}
We therefore deduce~\eqref{eq:V+Y-reliability} by combining the various bounds above.
\end{proof}

\subsection{Proof of efficiency}\label{sec:efficiency-proof}
We begin by proving local efficiency of the temporal jump and residual estimators.

\subsubsection{Proof of Theorem~\ref{theorem:local-efficiency}}
\begin{proof}
    Let $K \in \T$, $F \in \calF^I_K$, and $n \in \{1,...,N\}$. By the definition of the extended norm in~\eqref{eq:error-norm-defs}, we readily have an efficiency result for the local temporal jump estimator 
    \begin{equation} \label{eq:proof:eff:etaRec-bound}
    \sum_{i=1}^2 \etaRecon{i,K,n}^2\leq \norm{\uht-\Uht}_{L^2(I_n;H^1(\omega_K))}^2 + \norm{\mht-\Mht}_{L^2(I_n;H^1(\omega_K))}^2. 
\end{equation}
It remains to prove local efficiency of the residual estimator. Here we adapt the steady-state arguments of~\cite[Theorem 5.3]{OsborneSmearsWells2025} to the time-dependent setting. Since $r_{K,n,i} \in L^2(K)$ and $j_{F,n,i} \in L^2(F)$ for $t$ a.e.\ in $I_n$, it holds from standard spatial bubble function arguments~\cite[Lemma 5.1]{OsborneSmearsWells2025} that
\begin{equation}\label{eq:proof:eff:bubble-func-result}
\begin{aligned}
    h_K^2 \norm{r_{K,n,i}}_{L^2(I_n \times K)}^2 &\lesssim \int_{I_n} \norm{r_{K,n,i}}_{H^{-1}(K)}^2 + h_K^2 \norm{r_{K,n,i}-\Pi_{K,\kappa} r_{K,n,i}}_K^2\dt, \\
    h_F \norm{j_{F,n,i}}_{L^2(I_n \times F)}^2 &\lesssim \int_{I_n} \left[ \sup_{w \in H^1_0(\omega_F) \backslash \{0\}} \frac{\LTwoInner{j_{F,n,i}}{w}{F}}{\norm{\nabla w}_{\omega_F}} \right]^2  + h_F \norm{j_{F,n,i}-\Pi_{F,\kappa} j_{F,n,i}}_F^2\dt.
\end{aligned}
\end{equation}
Subtracting a local weak formulation of the MFG system (in which the time derivative is not cast onto the test function), bounding the resulting terms with the triangle, Cauchy--Schwarz, and the Poincar\'e inequalities, then applying Lipschitz continuity arguments of $H$, $\dpH$, $F$, we obtain
\begin{multline}
    \sum_{i=1}^2 \int_{I_n} \norm{r_{K,n,i}}_{H^{-1}(K)}^2\dt
    \\
    \lesssim \norm{u-\uht}_{L^2(I_n;H^1(K))}^2  + \norm{m-\mht}_{L^2(I_n;H^1(K))}^2 \\
    +\norm{\partial_t (u-\Uht)}_{L^2(I_n;H^{-1}(K))}^2 + \norm{\partial_t (m-\Mht)}_{L^2(I_n;H^{-1}(K))}^2 \\
      + \norm{F[m]-F[\mht]}_{L^2(I_n;H^{-1}(K))}^2, \label{eq:proof:eff:rK-bound}
\end{multline}
with a hidden constant dependent only on $d$, $\kappa$, $\nu$, $L_H$, $L_{H_p}$, $M_\infty$, and the shape-regularity of $\T$.
Note that in deriving the above bound we have applied the bound $h_K \lesssim \diam \Omega$ to absorb higher powers of $h_K$ into the hidden constant. Following a similar argument for the jump terms, applying integration by parts over $\omega_F$ and subtracting a local weak formulation of the MFG system and bounding error terms we have 
\begin{multline}
    \sum_{i=1}^2 \int_{I_n} \left[ \sup_{w \in H^1_0(\omega_F) \backslash \{0\}} \frac{\LTwoInner{j_{F,n,i}}{w}{F}}{\norm{\nabla w}_{\omega_F}} \right]^2\dt \lesssim \sum_{i=1}^2 \int_{I_n} \norm{r_{K,n,i}}_{H^{-1}(\omega_F)}^2\dt \\
    + \norm{\partial_t (u-\Uht)}_{L^2(I_n;H^{-1}(\omega_F))}^2 + \norm{\partial_t (m-\Mht)}_{L^2(I_n;H^{-1}(\omega_F))}^2 \\
     + \norm{u-\uht}_{L^2(I_n;H^1(\omega_F))}^2  + \norm{m-\mht}_{L^2(I_n;H^1(\omega_F))}^2 \\
      + \norm{F[m]-F[\mht]}_{L^2(I_n;H^{-1}(\omega_F))}^2. \label{eq:proof:eff:jF-bound}
\end{multline}
Next, we bound the local residual estimator by applying the bounds of~\eqref{eq:proof:eff:bubble-func-result},~\eqref{eq:proof:eff:rK-bound} and~\eqref{eq:proof:eff:jF-bound} over sums of elements $K \subset \omega_K$. This yields 
\begin{multline}
    \sum_{i=1}^2 \etaRes{K,n,i}^2 \lesssim  \norm{u-\uht}_{L^2(I_n;H^1( \omega_K))}^2  + \norm{m-\mht}_{L^2(I_n;H^1(\omega_K))}^2 \\
    +\norm{\partial_t (u-\Uht)}_{L^2(I_n;H^{-1}(\omega_K))}^2 + \norm{\partial_t (m-\Mht)}_{L^2(I_n;H^{-1}(\omega_K))}^2 \\
     + \norm{F[m]-F[\mht]}_{L^2(I_n;H^{-1}(\omega_K))}^2 + \sum_{i=1}^2 [\osc_{K,n,\kappa,i}]^2.\label{eq:proof:eff:etaRes-bound}
\end{multline}
Combining~\eqref{eq:proof:eff:etaRec-bound} and~\eqref{eq:proof:eff:etaRes-bound} concludes the proof.
\end{proof}

\subsubsection{Proof of Theorem~\ref{theorem:global-efficiency}}
\begin{proof}
    By summing the bounds in~\eqref{eq:local-efficiency} over each $K \in \T$ and $n \in \{1,...,N\}$, and using the Poincar\'e inequality, we find that
    \begin{multline*}
        \sum_{i=1}^2 \left[\etaRecon{i}^2 + \etaRes{i}^2 \right] \lesssim \norm{\uht-\Uht}_{X}^2 + \norm{\mht-\Mht}_X^2 \\
        + \norm{u-\Uht}_Y^2 + \norm{m-\Mht}_Y^2 + \norm{F[m]-F[\mht]}_{X^*}^2 +\sum_{i=1}^2 [\osc_{\kappa,i}]^2,
    \end{multline*}
    where we have also used the well-known inequality $\sum_{K \in \T} \norm{\Phi}_{H^{-1}(\omega_K)}^2 \lesssim \norm{\Phi}_{H^{-1}(\Omega)}^2$ for all $\Phi\in H^{-1}(\Omega)$,  where the hidden constant depends only on~$d$ and the shape-regularity of~$\T$.
   Next, we apply Young's inequality, the definition of the extended norms in~\eqref{eq:error-norm-defs}, the Lipschitz continuity of $F$, and the embedding $Y\hookrightarrow Z$, to obtain
    \begin{equation}
        \sum_{i=1}^2 \left[\etaRecon{i} + \etaRes{i} \right] \lesssim\norm{u-\uht}_{\Vone+Y} + \norm{m-\mht}_{\Vtwo + Y}  +\sum_{i=1}^2\osc_{\kappa,i}, \label{eq:proof:globaleff:etaRes-bound}
    \end{equation}
    with a hidden constant dependent only on $d$, $\kappa$, $\nu$, $L_H$, $L_{H_p}$, $L_F$, $M_\infty$, $\diam \Omega$, and the shape-regularity of $\T$.
    The initial and final time estimators are bounded by summing the bounds from~\eqref{eq:initial_final_estimator_efficiency} over all mesh elements, and applying the embedding~\eqref{eq:Y-embedding} as well as the Lipschitz continuity of $S$, which leads to the bound
    \begin{equation}
        \eta_0 + \eta_T \lesssim\norm{u-\uht}_{\Vone+Y} + \norm{m-\mht}_{\Vtwo + Y}, \label{eq:proof:globaleff:eta0T-bound}
    \end{equation}
    where the hidden constant depends only on $L_S$, $\diam \Omega$, and $T$.
   It remains only to bound the stabilization estimators defined in~\eqref{eq:etaStab-defs}. By subtracting the strong forms of the HJB and KFP equations from the FEM scheme in~\eqref{eq:FEM-scheme-def} and applying integration by parts in space, we have
    \begin{align*}
        \mathcal{S}_1(\uht;\mht;\vht) &= \int_0^T \LTwoInner{F[\mht]-F[m] + H[\nabla u]-H[\nabla \uht]}{\vht}{\Omega}\dt \nonumber \\
        &\hspace{-1cm} +\int_0^T \dualprod{\partial_t(\Uht-u)}{\vht}{} + \LTwoInner{\nu \nabla (u-\uht)}{\nabla \vht}{\Omega}\dt, \\
        \mathcal{S}_2(\uht;\mht;\wht) &= \int_0^T \dualprod{\partial_t (m-\Mht)}{\vht}{}\dt \nonumber \\
        &\hspace{-1cm} +\int_0^T \LTwoInner{\nu \nabla (m-\mht) + (m-\mht) \dpH[\nabla \uht]}{\nabla \wht}{\Omega}~dt.
    \end{align*}
    Then by repeating continuity arguments (cf. the proof of Lemma~\ref{lemma:residual-bounds}), we obtain
    \begin{equation}\label{eq:etaStab-bound-by-error}
    \begin{aligned}
        \etaStab{1} &\lesssim \norm{u-\Uht}_Y + \norm{u-\uht}_X + \norm{m-\mht}_X, \\
        \etaStab{2} &\lesssim \norm{m-\Mht}_Y + \norm{m-\mht}_X,
    \end{aligned}
    \end{equation}
    where the hidden constants depend only on $\nu$, $L_H$, $L_F$, and $\diam \Omega$. Then we apply the triangle inequality and the definition of the extended norms to get
    \begin{equation}
        \sum_{i=1}^2 \etaStab{i} \lesssim\norm{u-\uht}_{\Vone+Y} + \norm{m-\mht}_{\Vtwo + Y}. \label{eq:proof:globaleff:etaStab-bound}
    \end{equation}
    The proof is concluded by summing the global efficiency estimates~\eqref{eq:proof:globaleff:etaRes-bound}, \eqref{eq:proof:globaleff:eta0T-bound}, and~\eqref{eq:proof:globaleff:etaStab-bound}.
\end{proof}

\section{Stabilization-free error bounds}\label{sec:local-estimators}
We conclude the analysis by showing that under additional assumptions on the FEM scheme in~\eqref{eq:FEM-scheme-def}, the \emph{a posteriori} error bound does not require explicit computation of the stabilization estimators. 
To demonstrate that this is attainable, we consider a general class of stabilization schemes defined by
\begin{equation} \label{eq:Stabfree-defs}
    \begin{aligned}
    \calS_1(\uht;\mht;\vht) &\coloneqq \int_0^T \LTwoInner{\partial_t \Uht}{\vht}{\Omega} - \LTwoInner{\partial_t \Uht}{\vht}{\Omega,h} + \mathcal{S}_{h,1}(\uht; \mht; \vht)\dt, \\
    \calS_2(\uht;\mht;\wht) &\coloneqq \int_0^T \LTwoInner{\partial_t \Mht}{\wht}{\Omega,h} - \LTwoInner{\partial_t \Mht}{\wht}{\Omega}+\mathcal{S}_{h,2}(\uht; \mht; \vht)\dt,
\end{aligned}
\end{equation}
where $\mathcal{S}_{h,i}:[V_h]^3  \to \R$ denotes a spatial stabilization that is allowed to be nonlinear in its first two arguments, but is linear with respect to its third argument.

The main result of this section, presented in Theorem~\ref{theorem:stab-free-error-bound} below, is a stabilization-free \emph{a posteriori} error bound which is independent of the stabilization estimator and efficient, up to data oscillation terms. The analysis requires conditions on the discretization scheme and stabilization terms. Including the nonnegativity Hypothesis~\ref{H:stabilization_main}, we require four additional conditions: the time-discretization is larger than the squared mesh size (up to a hidden constant)~\ref{H:ht-ratio}, the spatial stabilization is affine-preserving~\ref{H:affine-preserving} and Lipschitz continuous~\ref{H:Lipschitz}, and the density approximation is uniformly bounded~\ref{H:Mht-infty}.

\subsection{Mass lumping error bound}
In this section, we show that the component of the stabilization estimator that results from lumping the mass matrix is bounded by the temporal jump estimators.
A key ingredient of the analysis is an original explicit formula for the mass lumping error given in Lemma~\ref{lemma:mass-lumping-expansion} below, which is of independent interest.
Recall that, for each edge $E \in \calE$ of the mesh, the vector~$t_E$ denotes a chosen tangent vector to the edge; the orientation of $t_E$ is of no consequence in the following.
\begin{lemma}[Formula for mass lumping]\label{lemma:mass-lumping-expansion}
   The mass lumping inner product $\LTwoInner{\cdot}{\cdot}{\Omega,h}$ defined in~\eqref{eq:mass-lump-def} satisfies
   \begin{equation}\label{eq:mass-lumping-diffusion-formula}
    \LTwoInner{v_h}{w_h}{\Omega,h} = \LTwoInner{v_h}{w_h}{\Omega} + \LTwoInner{D_h^M \nabla v_h}{\nabla w_h}{\Omega} \quad \forall v_h,w_h \in V_h,
\end{equation}
where the piecewise constant matrix-valued function $D_h^M \in  L^{\infty}(\Omega;\R^{d\times d}_{\mathrm{sym}})$ is defined elementwise by
    \begin{equation} \label{eq:D_h^M-def}
        D_h^M|_K \coloneqq \frac{1}{(d+1)(d+2)} \sum_{E \in \calE_K} h_E^2~ t_E \otimes t_E \quad \forall K \in \T.
    \end{equation}
\end{lemma}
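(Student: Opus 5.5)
The plan is to reduce \eqref{eq:mass-lumping-diffusion-formula} to an identity on a single simplex and then sum over $K\in\T$. Fix $v_h,w_h\in V_h$. The first step is to rewrite the lumped product elementwise. Every $v_h\in V_h$ vanishes at boundary vertices, so the sum in \eqref{eq:mass-lump-def} may be taken over all of $\calV$. Moreover $(\varphi_z,1)_\Omega=\sum_{K\ni z}|K|_d/(d+1)$. Together these give
\[
\LTwoInner{v_h}{w_h}{\Omega,h}=\sum_{K\in\T}\frac{|K|_d}{d+1}\sum_{i=0}^{d} v_i w_i,
\]
where $a_0,\dots,a_d$ are the vertices of $K$, $v_i\coloneqq v_h(a_i)$ and $w_i\coloneqq w_h(a_i)$. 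Since both sides of \eqref{eq:mass-lumping-diffusion-formula} then split into sums over elements, it suffices to prove the local identity on each $K$.

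On a fixed $K$, the second step uses the standard exact formula for the $\mathcal{P}_1$ local mass matrix,
\[
\int_K \lambda_i\lambda_j\,\dx=\frac{|K|_d(1+\delta_{ij})}{(d+1)(d+2)},
\]
where the $\lambda_i$ are the barycentric coordinates. This gives
\[
(v_h,w_h)_K=\frac{|K|_d}{(d+1)(d+2)}\Big(\sum_i v_iw_i+\sum_i v_i\sum_j w_j\Big).
\]
Subtracting this from the lumped local term yields
\[
\frac{|K|_d}{(d+1)(d+2)}\Big((d+1)\sum_i v_iw_i-\sum_i v_i\sum_j w_j\Big).
\]

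The third, key step is the elementary algebraic identity
\[
(d+1)\sum_i v_iw_i-\Big(\sum_i v_i\Big)\Big(\sum_j w_j\Big)=\sum_{0\le i<j\le d}(v_i-v_j)(w_i-w_j).
\]
It follows by writing the right-hand side as $\tfrac12\sum_{i,j}$ and expanding. Each unordered pair $\{i,j\}$ corresponds to exactly one edge $E\in\calE_K$, with $a_i-a_j=\pm h_E t_E$. Because $v_h|_K$ is affine, $v_i-v_j=\pm h_E\nabla v_h|_K\cdot t_E$, and the same holds for $w_h$ with the same sign, so the orientation of $t_E$ drops out. Hence the difference equals
\[
\frac{|K|_d}{(d+1)(d+2)}\sum_{E\in\calE_K}h_E^2(\nabla v_h\cdot t_E)(\nabla w_h\cdot t_E)=\int_K D_h^M\nabla v_h\cdot\nabla w_h\,\dx,
\]
since the gradients are constant on $K$. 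This is exactly the definition \eqref{eq:D_h^M-def}. Summing over $K$ gives \eqref{eq:mass-lumping-diffusion-formula}, and $D_h^M$ is evidently symmetric.

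The only points needing care are the treatment of boundary vertices in the lumped sum, which is harmless because $v_h$ vanishes there, and the mass-matrix constant $(1+\delta_{ij})/((d+1)(d+2))$. The latter follows from the formula $\int_K\lambda^\alpha\,\dx=d!\,\alpha!\,|K|_d/(d+|\alpha|)!$. Nothing deeper is required.
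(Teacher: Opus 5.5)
Your proposal is correct and follows essentially the same route as the paper: compare the exact $\mathcal{P}_1$ local mass matrix with the lumped one on each element, then rewrite the difference edge by edge using $h_E\nabla v_h|_K\cdot t_E=\pm(v_i-v_j)$. The only difference is presentational. The paper reduces by linearity to pairs of nodal basis functions and verifies the entries $\frac{|K|}{(d+1)(d+2)}\bigl((d+1)\delta_{z_1z_2}-1\bigr)$ by counting the edges through each vertex. You instead work with general vertex values and the identity $(d+1)\sum_i v_iw_i-\sum_i v_i\sum_j w_j=\sum_{i<j}(v_i-v_j)(w_i-w_j)$, which is the same computation written as a bilinear form.
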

\begin{proof}
By linearity, it is sufficient to show that \eqref{eq:mass-lumping-diffusion-formula} holds for all pairs of nodal basis functions $\{\varphi_z\}_{z\in \calV^I}$ of $V_h$.
    For each element $K \in \T$, let the local contribution to the mass-lumped inner product be defined by $\LTwoInner{v_h}{w_h}{K,h} \coloneqq \sum_{z \in \calV_K} \LTwoInner{\varphi_z}{1}{K} v_h(z)w_h(z)$, for all $v_h$, $w_h\in V_h$.
    Let $z_1,z_2 \in \calV_K$ be two vertices of $K$. 
    It is known from~\cite[Eq.~(30.3), p.~86]{ErnGuermond2021ii} that
    \begin{equation} \label{eq:proof:mass-lump-diff-1}
        \LTwoInner{\varphi_{z_1}}{\varphi_{z_2}}{K,h} = \frac{\abs{K}}{(d+1)} \delta_{z_1 z_2}, \quad \LTwoInner{\varphi_{z_1}}{\varphi_{z_2}}{K} = \frac{(1+\delta_{z_1 z_2})\abs{K}}{(d+1)(d+2)},
    \end{equation}
    where $\delta_{z_1z_2}$ denote the Kronecker delta.
 Then, recalling that functions in $V_h$ are piecewise affine, we have the identity $h_E \nabla \varphi_{z_i}|_K\cdot t_E = \varphi_{z_i}(z_E)-\varphi_{z_i}(z_E^\prime)$ for each edge $E\in \calV_K$ and $i\in\{1,2\}$, where $z_{E}$ and $z_{E}^\prime$ denote the endpoints of the edge $E$ such that $z_E-z_E^\prime=h_E t_E$.
Therefore, we deduce that
    \begin{equation}\label{eq:proof:mass-lump-diff-2}
        \begin{split}
    \LTwoInner{D_h^M \nabla \varphi_{z_1}}{\nabla \varphi_{z_2}}{K} & = \frac{\abs{K}}{(d+1)(d+2)} \sum_{E \in \calE_K}  (h_E\nabla \varphi_{z_1} \cdot t_E) (h_E\nabla \varphi_{z_2} \cdot t_E)
    \\ & = \frac{\abs{K}}{(d+1)(d+2)}\left((d+1)\delta_{z_1z_2} -1\right),
    \end{split}
    \end{equation}
    where we obtain the second line above by noting that, in the case $z_1=z_2$, there are precisely $d$ edges in $\calE_K$ that contain the vertex $z_1$, whereas in the case $z_1\neq z_2$, the product $(h_E\nabla \varphi_{z_1} \cdot t_E) (h_E\nabla \varphi_{z_2} \cdot t_E)$ is nonvanishing if and only if $E$ is the edge formed by the vertices $z_1$ and $z_2$.
    By comparing~\eqref{eq:proof:mass-lump-diff-1} with \eqref{eq:proof:mass-lump-diff-2}, we deduce that $ \LTwoInner{\varphi_{z_1}}{\varphi_{z_2}}{K,h} = \LTwoInner{\varphi_{z_1}}{\varphi_{z_2}}{K} + \LTwoInner{D_h^M \nabla \varphi_{z_1}}{\nabla \varphi_{z_2}}{K}$ for all $z_1$, $z_2\in \calV_K$. By summing the previous identity over all mesh elements, we deduce that~\eqref{eq:mass-lumping-diffusion-formula} holds for all pairs of basis functions of $V_h$, and thus, by linearity, for all pairs of functions in $V_h$.
\end{proof}
\begin{remark}\label{rem:masslumping_extension}
Lemma~\ref{lemma:mass-lumping-expansion} is of independent interest since it shows that the mass lumped inner-product defined on $V_h$ can be extended in a consistent and bounded way to all functions in $H^1(\Omega)$, including functions that are not sufficiently regular to apply the usual nodal definition from~\eqref{eq:mass-lump-def}.
\end{remark}

We now prove that the mass lumping error, when applied to the time reconstruction of a function, is bounded up to constants by the temporal jump estimator,
under the condition that each time-step size is bounded from below by a constant times the square of the maximum mesh size:
\begin{enumerate}[label={(H\arabic*)},resume]
    \item \label{H:ht-ratio} There exists a constant $C_{\T \mathcal{J}} > 0$ such that the discretization parameters satisfy $\tau_n \geq C_{\T \mathcal{J}} h_K^2$ for all $n \in \{1,...,N\}$ and $K \in \T$.
\end{enumerate}
Hypothesis~\ref{H:ht-ratio} is usually satisfied in practical computations, and it permits locally refined spatial meshes and long time-steps.
This hypothesis is not to be confused with the much more restrictive assumption of a parabolic CFL condition.

\begin{proposition}[Bound on mass lumping terms]\label{prop:mass-lump-estimate}
    Assume~\ref{H:ht-ratio}. Then, for every $\vht \in \Vi$, we have
    \begin{equation}\label{eq:mass-lump-estimate}
        \sup_{\wht \in \V \setminus \{0\}} \frac{\int_0^T \LTwoInner{\partial_t \calI_i \vht}{\wht}{\Omega} - \LTwoInner{\partial_t \calI_i \vht}{\wht}{\Omega,h}\dt}{\norm{\wht}_X} \lesssim \norm{\vht-\calI_{i} \vht}_X,
    \end{equation}
    where the hidden constant depends only on $C_{\T \mathcal{J}}$ defined in Hypothesis~\ref{H:ht-ratio} above. 
\end{proposition}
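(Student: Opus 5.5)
By Lemma~\ref{lemma:mass-lumping-expansion}, for each $t$ and all $\wht\in\V$, the numerator can be rewritten as
\[
\int_0^T \LTwoInner{\partial_t \calI_i \vht}{\wht}{\Omega} - \LTwoInner{\partial_t \calI_i \vht}{\wht}{\Omega,h}\dt = -\int_0^T \LTwoInner{D_h^M \nabla \partial_t \calI_i \vht}{\nabla \wht}{\Omega}\dt .
\]
This is legitimate because $\partial_t\calI_i\vht(t)\in V_h$ on each $I_n$. Indeed, on $I_n$ we have $\partial_t \calI_1\vht = -\tau_n^{-1}\timejump{\vht}_n$ and $\partial_t\calI_2\vht = -\tau_n^{-1}\timejump{\vht}_{n-1}$, which are spatially piecewise affine and constant in time. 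So the mass-lumping error becomes a weighted diffusion term in the temporal jumps, and the whole proof reduces to controlling this term.

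Next I bound $D_h^M$ elementwise. On each $K$, the definition gives $\|D_h^M|_K\| \le \frac{1}{(d+1)(d+2)}\sum_{E\in\calE_K}h_E^2 \lesssim h_K^2$, since $K$ has $d(d+1)/2$ edges. The bound depends only on $d$, with no shape regularity needed. Applying Cauchy--Schwarz on each $I_n\times K$ then gives
\[
\Big|\int_{I_n}\LTwoInner{D_h^M\nabla\partial_t\calI_i\vht}{\nabla\wht}{K}\dt\Big| \lesssim \frac{h_K^2}{\tau_n}\,\tau_n^{1/2}\norm{\nabla \timejump{\vht}_{n'}}_K\,\norm{\nabla\wht}_{L^2(I_n\times K)},
\]
where $n'=n$ for $i=1$ and $n'=n-1$ for $i=2$.

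Hypothesis~\ref{H:ht-ratio} gives $h_K^2/\tau_n\le C_{\T\mathcal{J}}^{-1}$, which removes the mesh-dependent factor. It remains to identify $\tau_n^{1/2}\norm{\nabla\timejump{\vht}_{n'}}_K$ with the local temporal jump quantity. From the definition of $\calI_i$, on $I_n$ we have $\vht-\calI_1\vht = \frac{t-t_{n-1}}{\tau_n}\timejump{\vht}_n$, so
\[
\norm{\nabla(\vht-\calI_1\vht)}_{L^2(I_n\times K)}^2 = \tfrac{\tau_n}{3}\norm{\nabla\timejump{\vht}_n}_K^2,
\]
and the case $i=2$ is analogous. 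Summing over $K$ and $n$ and applying the discrete Cauchy--Schwarz inequality yields a bound of $\sqrt3\, C_{\T\mathcal{J}}^{-1}\,\norm{\vht-\calI_i\vht}_X\norm{\wht}_X$. Dividing by $\norm{\wht}_X$ and taking the supremum gives~\eqref{eq:mass-lump-estimate}.

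The main point requiring care is the first step. One must check that the identity of Lemma~\ref{lemma:mass-lumping-expansion} applies pointwise in time, which holds since $\partial_t\calI_i\vht$ is $V_h$-valued and piecewise constant in time. One must also check that the factor $1/\tau_n$ produced by the time derivative is exactly compensated by $h_K^2$ through~\ref{H:ht-ratio}. The remaining bookkeeping of indices ($\timejump{\cdot}_n$ versus $\timejump{\cdot}_{n-1}$) is routine.
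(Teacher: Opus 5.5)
Your proposal is correct and follows essentially the same route as the paper: you rewrite the mass-lumping error via Lemma~\ref{lemma:mass-lumping-expansion} as $-\int_0^T \LTwoInner{D_h^M\nabla\partial_t\calI_i\vht}{\nabla\wht}{\Omega}\,\dt$, bound $D_h^M$ by $h^2$, convert $\partial_t\calI_i\vht$ into $\calI_i\vht-\vht$ at the cost of a factor $\sqrt{3}/\tau_n$, and absorb the ratio $h_K^2/\tau_n$ using Hypothesis~\ref{H:ht-ratio}. The only difference is cosmetic: you localize $h_K^2/\tau_n$ elementwise, whereas the paper first uses the global bound $\norm{D_h^M}_{L^\infty(Q_T;\R^{d\times d})}\lesssim \max_{K\in\T}h_K^2$ before invoking the hypothesis.
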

\begin{proof}
 We start by applying Lemma~\ref{lemma:mass-lumping-expansion} and the Cauchy--Schwarz inequality to obtain
    \begin{multline}\label{eq:proof:mass-diff-X-norm}
            \sup_{\wht \in \V \backslash \{0\}}  \frac{\int_0^T \LTwoInner{\partial_t \calI_i \vht}{\wht}{\Omega} - \LTwoInner{\partial_t \calI_i \vht}{\wht}{\Omega,h}\dt}{\norm{\wht}_X} \\
            \leq \norm{D_h^M}_{L^\infty(Q_T;\R^{d \times d})} \norm{\partial_t \calI_i \vht}_X
            \lesssim \max_{K\in\T}h_K^2 \norm{\partial_t \calI_i \vht}_X.
    \end{multline}
Next, we relate $\partial_t \calI_i \vht$ with the difference $\calI_i \vht -\vht$. 
By definition of $\calI_i \vht$ in~\eqref{eq:Reconstruction-defs}, we have, for each $n \in \{1,...,N\}$ and time interval $I_n$, the identities $(t-t_{n-1})\partial_t \calI_1 \vht = \calI_1 \vht -\vht$ if $\vht\in \Vone$ and $(t-t_n)\partial_t \calI_2 \vht = \calI_2 \vht -\vht$ if $\vht\in \Vtwo$.
Therefore, after integrating  over $I_n\times \Omega$ and using the fact that $\partial_t \calI_i \vht$ is constant in time on $I_n$, we deduce from the above identities that
    \begin{equation}\label{eq:proof:time-grad-identity}
        \int_{I_n} \norm{\nabla \partial_t \calI_i \vht}_\Omega^2\dt = \frac{3}{\tau_n^2} \int_{I_n} \norm{\nabla (\calI_i \vht - \vht)}^2_\Omega\dt \quad \forall n\in\{1,\ldots,N\}.
    \end{equation}
It is then clear that~\eqref{eq:mass-lump-estimate} follows from \eqref{eq:proof:mass-diff-X-norm} and \eqref{eq:proof:time-grad-identity} under Hypothesis~\ref{H:ht-ratio}.
\end{proof}

\subsection{Spatial stabilization}
The analysis of the remaining spatial stabilization makes use of our previous results on affine-preserving spatial stabilization for steady-state MFG \cite{OsborneSmearsWells2025}.
 Here we denote the patch of elements containing a given vertex $z \in \calV^I$ by $\omega_z$.
This analysis requires three additional conditions, which are patchwise affine preservation, Lipschitz continuity, and uniform boundedness of the discrete densities.
\begin{enumerate}[label={(H\arabic*)},resume]
\item \label{H:affine-preserving} The stabilizations are patchwise affine-preserving: for every interior vertex $z \in \calV^I$ and each $i \in \{1,2\}$, it holds that $\calS_{h,i}(v_h; \widetilde{v}_h;\varphi_z) = 0$ whenever $v_h$ and $\widetilde{v}_h$ are affine functions over $\omega_z$, i.e.\ $v_h|_{\omega_z} \in \mathcal{P}_1(\omega_z)$ and $\widetilde{v}_h|_{\omega_z} \in \mathcal{P}_1(\omega_z)$.
\item \label{H:Lipschitz} The spatial stabilization scheme $\calS_{h,i}(\cdot,\cdot)$ satisfies for each $i \in \{1,2\}$
\begin{equation*}
\begin{aligned}
    \abs{\calS_{h,i}(v_h;w_h;\varphi_z) - \calS_{h,i}(\widetilde{v}_h; \widetilde{w}_h;\varphi_z)}& \\
    \leq L_{\calS} \abs{\omega_z}_d^{\frac{1}{2}} &\left( \norm{\nabla (v_h - \widetilde{v}_h)}_{\omega_z} + \norm{\nabla (w_h - \widetilde{w}_h)}_{\omega_z} \right), 
\end{aligned}
\end{equation*}
for all $v_h, w_h, \widetilde{v}_h, \widetilde{w}_h \in V_h$ and $z \in \calV^I$, for some bounded constant $L_{\calS} > 0$ that is independent of the discretization parameters. 
\item \label{H:Mht-infty} The density approximation $\mht \in \Vtwo$ satisfies 
\begin{equation*}
    \norm{\mht}_{L^\infty(Q_T)} \leq \widetilde{M}_\infty,
\end{equation*}
for some bounded constant $\widetilde{M}_\infty > 0$ that is independent of the discretization parameters. 
\end{enumerate}
We refer the reader to~\cite[Section~6]{OsborneSmearsWells2025} for some examples of stabilizations that verify the hypotheses above.
Under these conditions, we can bound the spatial component of the stabilization scheme by the jump components of the residual estimators.

\begin{proposition}[Estimation of spatial stabilization]\label{prop:stab<=jump}
Let $(\uht,\mht) \in \Vone \times \Vtwo$ be the numerical solution to the FEM scheme~\eqref{eq:FEM-scheme-def} computed using the stabilization scheme~\eqref{eq:Stabfree-defs}. If Hypotheses~\ref{H:affine-preserving}, \ref{H:Lipschitz}, and \ref{H:Mht-infty} hold, then it follows that
\begin{equation}
    \sum_{i=1}^2 \left[ \sup_{v_{{h,\tau}} \in\V\setminus\{0\}} \frac{\int_0^T \calS_{h,i}(\uht; \mht; v_{{h,\tau}}) \dt}{\norm{v_{{h,\tau}}}_X} \right] \lesssim \sum_{i=1}^2 \etaRes{i}, \label{eq:stab<=jump}
\end{equation}    
where the hidden constant depends on $d$, $\nu$, $L_{H_p}$, $L_{\calS}$, $\widetilde{M}_\infty$, and the shape-regularity of $\T$.
\end{proposition}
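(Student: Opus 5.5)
The argument is local in space and time: reduce to a single vertex patch and a single time interval, bound the stabilization acting on one hat function by the face jumps in that patch, and reassemble. Fix $n\in\{1,\dots,N\}$ and write $u_n\coloneqq\uht|_{I_n}$, $m_n\coloneqq\mht|_{I_n}\in V_h$. Since $\vht|_{I_n}=\sum_{z\in\calV^I} v_n(z)\varphi_z$ and $\calS_{h,i}$ is linear in its third argument,
\begin{equation*}
\int_0^T \calS_{h,i}(\uht;\mht;\vht)\,\dt=\sum_{n=1}^N\tau_n\sum_{z\in\calV^I} v_n(z)\,\calS_{h,i}(u_n;m_n;\varphi_z).
\end{equation*}
The goal is therefore a vertexwise bound
\begin{equation*}
\abs{\calS_{h,i}(u_n;m_n;\varphi_z)}\lesssim \abs{\omega_z}_d^{1/2}\,h_z^{-1/2}\sum_{F\in\calF^I,\,F\subset\omega_z,\,z\in F}\Big(\norm{\jump{\nabla u_n\cdot n_F}}_F+\norm{\jump{\nabla m_n\cdot n_F}}_F\Big),
\end{equation*}
after which the right-hand side is related to $\etaRes{i}$.

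To prove the vertexwise bound, follow the steady-state argument of \cite{OsborneSmearsWells2025}. On each patch $\omega_z$, pick affine functions $a_z,b_z\in\mathcal{P}_1(\omega_z)$, for instance $a_z=u_n|_{K_z}$ for a fixed $K_z\subset\omega_z$, or a patchwise best affine fit. Extend them to $\widetilde v_h,\widetilde w_h\in V_h$ that coincide with $a_z,b_z$ on $\omega_z$; this is possible by altering only the nodal values in the patch, since the stabilization depends on its arguments only through $\omega_z$, as is implicit in \ref{H:Lipschitz}. By \ref{H:affine-preserving}, $\calS_{h,i}(\widetilde v_h;\widetilde w_h;\varphi_z)=0$, so \ref{H:Lipschitz} gives
\begin{equation*}
\abs{\calS_{h,i}(u_n;m_n;\varphi_z)}\le L_{\calS}\abs{\omega_z}_d^{1/2}\big(\norm{\nabla(u_n-a_z)}_{\omega_z}+\norm{\nabla(m_n-b_z)}_{\omega_z}\big).
\end{equation*}
A discrete patch estimate for piecewise affine functions, obtained by walking across faces from $K_z$ and using shape regularity, then yields
\begin{equation*}
\norm{\nabla(u_n-a_z)}_{\omega_z}\lesssim \sum_{F} h_F^{1/2}\norm{\jump{\nabla u_n\cdot n_F}}_F.
\end{equation*}
Here the gradient jump is purely normal because tangential gradients are continuous, which gives the same $h$-scaling as before up to shape-regular constants.

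The $\uht$-jumps are exactly $\nu^{-1}j_{F,n,1}$. The $\mht$-jumps come from $j_{F,n,2}$ via
\begin{equation*}
\nu\jump{\nabla \mht\cdot n_F}=j_{F,n,2}-\mht\jump{\dpH[\nabla\uht]\cdot n_F},
\end{equation*}
combined with $\abs{\mht}\le\widetilde M_\infty$ from \ref{H:Mht-infty} and $\abs{\jump{\dpH[\nabla\uht]}}\le L_{H_p}\abs{\jump{\nabla\uht}}=L_{H_p}\abs{\jump{\nabla \uht\cdot n_F}}$. Hence both jump contributions are controlled by $j_{F,n,1}$ and $j_{F,n,2}$, with constants depending on $\nu$, $L_{H_p}$ and $\widetilde M_\infty$.

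To conclude, sum over $z$ and $n$ and apply the Cauchy--Schwarz inequality. This requires
\begin{equation*}
\sum_z \abs{\omega_z}_d\,\abs{v_n(z)}^2\lesssim \norm{v_n}_\Omega^2,
\end{equation*}
which is nodal norm equivalence on $V_h$, together with a bookkeeping of powers of $h$. The count does not balance directly: the jumps carry $h_F^{1/2}$ while $\norm{v_n}_\Omega$ must be upgraded to $\norm{\nabla v_n}_\Omega$, which costs a local factor. I would therefore avoid nodal values and instead use $\calS_{h,i}(u_n;m_n;v_n)=\sum_z\calS_{h,i}(u_n;m_n;(v_n(z)-c_z)\varphi_z)$ with patch constants $c_z$. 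This works if $\sum_z c_z\varphi_z$ can be arranged to be handled by the affine-preserving identity, as in \cite{OsborneSmearsWells2025}, and it would give the local Poincaré gain $\abs{v_n(z)-c_z}\lesssim h_z\abs{\omega_z}_d^{-1/2}\norm{\nabla v_n}_{\omega_z}$. Getting this step exactly as in the steady-state paper is the main obstacle; since the stabilization acts time-interval by time-interval, I would import the steady-state estimate for each fixed $n$ and then integrate in time. With that estimate, bounded patch overlap gives the result with $\etaRes{i}$ on the right.
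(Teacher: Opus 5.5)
Your final plan is exactly the paper's proof: apply the steady-state vertex-patch estimate of \cite{OsborneSmearsWells2025} on each time interval $I_n$, then sum over $n$ with the Cauchy--Schwarz inequality. That estimate is built, as you sketch, from \ref{H:affine-preserving}, \ref{H:Lipschitz}, a patch jump bound, and the conversion of the $\mht$-jumps via $\widetilde{M}_\infty$ and $L_{H_p}$.

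The summation ``obstacle'' you raise is only a bookkeeping slip. Your own patch estimate gives the factor $h_F^{1/2}$, not the $h_z^{-1/2}$ of your goal display. With the correct factor, $\sum_z \abs{\omega_z}_d \abs{v_n(z)}^2 \lesssim \norm{v_n}_\Omega^2$ together with the global Poincar\'e inequality closes the count directly, with a constant proportional to $\diam\Omega$. The patch-constant device is therefore unnecessary, and it is not implied by \ref{H:affine-preserving} anyway, since that hypothesis only constrains the first two arguments of $\calS_{h,i}$.
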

\begin{proof}
    First we apply the results from the steady-state case of \cite[Lemma 6.1 \& Theorem 6.4]{OsborneSmearsWells2025}, which use Hypotheses~\ref{H:affine-preserving} and~\ref{H:Lipschitz}, and \ref{H:Mht-infty}, to obtain the bound 
    \begin{equation}
         \calS_{h,i}(\uht|_{I_n};\mht|_{I_n};v_h) \lesssim \left( \sum_{i=1}^2 h_F \norm{j_{F,n,i}}_{F}^2  \right)^{\frac{1}{2}} \norm{\nabla  v_h}_\Omega, \quad \forall v_h \in V_h,  \label{eq:proof:stab<=jump-1}
    \end{equation}
     for each $n\in \{1,\ldots, N\}$ and each $i\in\{1,2\}$, with a hidden constant that depends on $d$, $\nu$, $L_{H_p}$, $L_{\calS}$, $\widetilde{M}_\infty$, and the shape-regularity of $\T$.
    The bound~\eqref{eq:stab<=jump} is then obtained by summing~\eqref{eq:proof:stab<=jump-1} over all time intervals, applying the Cauchy--Schwarz inequality, and recalling the definition of the residual estimator from~\eqref{eq:eta-res-defs}.
\end{proof}

\subsection{A posteriori bounds}
Our last main result in Theorem~\ref{theorem:stab-free-error-bound} below shows that, under the above hypotheses, we can obtain a locally computable estimator in which the stabilization estimators can be removed.

\begin{theorem}[stabilization-free \emph{a posteriori} error bound]\label{theorem:stab-free-error-bound}
    Let $(\uht,\mht) \in \Vone \times \Vtwo$ be the numerical solution of the FEM scheme~\eqref{eq:FEM-scheme-def} computed using a stabilization scheme of form~\eqref{eq:Stabfree-defs}. If Hypotheses~\ref{H:stabilization_main}, \ref{H:ht-ratio}, \ref{H:affine-preserving}, \ref{H:Lipschitz}, and \ref{H:Mht-infty} hold, then 
    \begin{equation}\label{eq:stab-free-error-bound}
    \begin{aligned}
          \norm{u-\uht}_{\Vone +Y} + \norm{m-\mht}_{\Vtwo + Y} &\lesssim \sum_{i=1}^2 [\etaRecon{i} + \etaRes{i}] +\eta_0 + \eta_T+ \sum_{i=1}^2 \osc_{\tau,i},
    \end{aligned}
       \end{equation}
     where the hidden constant depends on $d$, $\nu$, $L_H$, $L_{H_p}$, $L_F$, $L_S$, $M_\infty$, $C_{\T \mathcal{J}}$, $L_{\calS}$, $\widetilde{M}_\infty$, $\diam \Omega$, $T$, and the shape-regularity of $\T$.
\end{theorem}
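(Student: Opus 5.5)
The plan is to start from the reliability bound of Theorem~\ref{theorem:reliability} and then absorb the stabilization estimators $\etaStab{1}$ and $\etaStab{2}$ into the remaining estimator components. Hypothesis~\ref{H:stabilization_main} holds, so Theorem~\ref{theorem:reliability} gives
\begin{equation*}
\norm{u-\uht}_{\Vone+Y} + \norm{m-\mht}_{\Vtwo + Y} \lesssim \sum_{i=1}^2\left[\etaRecon{i}+\etaRes{i}+\etaStab{i}\right] + \eta_0+\eta_T + \sum_{i=1}^2\osc_{\tau,i}.
\end{equation*}
It therefore suffices to show that $\etaStab{1}+\etaStab{2}\lesssim \sum_{i=1}^2[\etaRecon{i}+\etaRes{i}]$.

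For the stabilizations of the form~\eqref{eq:Stabfree-defs}, the sublinearity of the supremum lets us split each $\etaStab{i}$ into two parts. The first is a mass-lumping part,
\begin{equation*}
\sup_{\vht\in\V\setminus\{0\}} \frac{\int_0^T \pm\left[(\partial_t \calI_i w_{h,\tau},\vht)_\Omega - (\partial_t\calI_i w_{h,\tau},\vht)_{\Omega,h}\right]\dt}{\norm{\vht}_X},
\end{equation*}
with $w_{h,\tau}=\uht$ for $i=1$ and $w_{h,\tau}=\mht$ for $i=2$. The second is the spatial part, $\sup_{\vht}\int_0^T\calS_{h,i}(\uht;\mht;\vht)\dt/\norm{\vht}_X$.

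The sign in front of the mass-lumping term does not matter, since $\vht$ ranges over a linear space. Under Hypothesis~\ref{H:ht-ratio}, Proposition~\ref{prop:mass-lump-estimate} bounds this part by $\norm{w_{h,\tau}-\calI_i w_{h,\tau}}_X$, which is exactly $\etaRecon{1}$ for $i=1$ (since $\Uht=\calI_1\uht$) and $\etaRecon{2}$ for $i=2$ (since $\Mht=\calI_2\mht$).

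For the spatial part, Hypotheses~\ref{H:affine-preserving}, \ref{H:Lipschitz} and~\ref{H:Mht-infty} allow us to apply Proposition~\ref{prop:stab<=jump}, which bounds it by $\sum_{i}\etaRes{i}$.

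Substituting these bounds into the reliability estimate gives~\eqref{eq:stab-free-error-bound}. The hidden constant collects the dependencies from Theorem~\ref{theorem:reliability}, from Proposition~\ref{prop:mass-lump-estimate} (through $C_{\T\mathcal{J}}$, together with the bound $\norm{D_h^M}_{L^\infty}\lesssim \max_K h_K^2$), and from Proposition~\ref{prop:stab<=jump} (through $L_{\calS}$ and $\widetilde M_\infty$).

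The only step needing care is checking that the quotient in the definition~\eqref{eq:etaStab-defs} really splits as described. In~\eqref{eq:Stabfree-defs} the test function in $\calS_2$ is written as $\wht$ in one place and $\vht$ in another; both are read as the same test function in $\V$. Both pieces are linear in the test function, so the triangle inequality for the supremum applies directly. No genuine obstacle is expected beyond this bookkeeping, since the substantive work is already contained in Propositions~\ref{prop:mass-lump-estimate} and~\ref{prop:stab<=jump}.
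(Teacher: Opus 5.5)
Your proposal is correct and follows the paper's proof: the paper likewise bounds $\etaStab{1}+\etaStab{2}$ by $\sum_{i=1}^2[\etaRes{i}+\etaRecon{i}]$ using Propositions~\ref{prop:mass-lump-estimate} and~\ref{prop:stab<=jump}, and then substitutes this into the reliability bound of Theorem~\ref{theorem:reliability}. Splitting the supremum, noting that the sign of the mass-lumping term is irrelevant, and reading $\wht$ and $\vht$ in \eqref{eq:Stabfree-defs} as the same test function only make explicit the bookkeeping that the paper leaves implicit.
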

\begin{proof}
As a consequence of Propositions~\ref{prop:mass-lump-estimate} and~\ref{prop:stab<=jump}, the stabilization estimators are bounded in terms of the residual and temporal jump estimators, i.e.\
\begin{equation}
   \sum_{i=1}^2 \etaStab{i} \lesssim \sum_{i=1}^2 \left[\etaRes{i} + \etaRecon{i} \right]. \label{eq:etaStab<=etaRes+etaRec}
\end{equation}
The bound~\eqref{eq:stab-free-error-bound} is then obtained directly from~\eqref{eq:etaStab<=etaRes+etaRec} and Theorem~\ref{theorem:reliability}. 
\end{proof}
Note that the local efficiency of the residual and temporal jump estimators was already shown above in Theorem~\ref{theorem:local-efficiency}, and the local efficiency of the initial and final time estimators was shown in Remark~\ref{rem:efficiency_initial_final}.
Therefore, the estimator on the right-hand side of~\eqref{eq:stab-free-error-bound} is both locally computable and locally efficient.


\section*{Conclusion}
We obtained the first \emph{a posteriori} error bounds for a general class of stabilized finite element approximations of time-dependent mean 
 field games. 
The estimators are reliable and efficient for a very general class of stabilizations.
Under some additional structural assumptions on the stabilization and a weak condition on the discretization parameters, we also showed that the stabilization estimators are bounded by the temporal and spatial jump estimators, resulting in locally computable and locally efficient estimators.

\bibliographystyle{siamplain_NoURL} 
\bibliography{references}

\end{document}